\title[Spectre des variétés hyperboliques]{Sur le spectre et la topologie des vari\'et\'es hyperboliques de congruence : les cas complexe et quaternionien}
\author{Nicolas Bergeron et Laurent Clozel} 
\address{Institut de Math\'ematiques de Jussieu \\
Unit\'e Mixte de Recherche 7586 du CNRS \\
Universit\'e Pierre et Marie Curie \\
4, place Jussieu 75252 Paris Cedex 05, France \\}
\email{bergeron@math.jussieu.fr}
\urladdr{http://people.math.jussieu.fr/~bergeron}
\address{Universit\'e Paris Sud \\
Unit\'e Mixte de Recherche 8628 du CNRS \\
Laboratoire de Math\'ematiques \\
B\^at. 425, 91405 Orsay cedex, France\\}
\email{Laurent.Clozel@math.math.u-psud.fr}
\newtheorem{thm}{Th\'eor\`eme}[section]   
\newtheorem{lem}[thm]{Lemme}
\theoremstyle{definition}
\numberwithin{equation}{section}
\newcommand{\RR}{\mathbf R}
\newcommand{\CC}{\mathbf C}
\newcommand{\ZZ}{\mathbf Z}
\newcommand{\QQ}{\mathbf Q}
\newcommand{\Ab}{\mathbf{A}}
\newcommand{\GL}{\mathrm{GL}}
\newcommand{\SL}{\mathrm{SL}}
\newcommand{\SO}{\mathrm{SO}}
\newcommand{\Sp}{\mathrm{Sp}}
\newcommand{\SU}{\mathrm{SU}}
\newcommand{\U}{\mathrm{U}}
\newcommand{\G}{\mathbf{G}}
\newcommand{\GM}{\mathbf{M}}
\def\adots{\mathinner{\mkern2mu\raise1pt\hbox{.}
\mkern3mu\raise4pt\hbox{.}\mkern1mu\raise7pt\hbox{.}}}
\begin{document}

\begin{abstract}  
En nous basant sur les r\'esultats d'Arthur et de Mok, 
nous étendons aux variétés hyperboliques de volume fini complexes et quaternioniennes les résultats de \cite{Inventiones}. 
Dans le cas du spectre sur les fonctions, nous montrons que nos résultats de \og quantification \fg \ des valeurs propres sont optimaux. 
En guise d'application, on démontre enfin une \og propriété de Lefschetz \fg \ pour l'application de restriction en cohomologie d'un quotient arithmétique 
non compact d'une boule vers un quotient d'une boule de dimension plus petite.  Ce résultat généralise un résultat récent de Nair et en donne une version \og optimale \fg.
\end{abstract}
\maketitle

\section{Introduction}
Dans cette note, nous étendons aux variétés hyperboliques de volume fini complexes et quaternioniennes les résultats de \cite{Inventiones}. On considère donc des quotients de la forme $\Gamma\backslash G/K$ où $G$ est (isogène à) $\SU (n,1)$ ou $\Sp(n,1)$, $K$ en est un sous-groupe compact maximal, et $\Gamma\subset G$ est un sous-groupe de congruence (notion précisée plus loin).

En premier lieu, nous décrivons, dans le cas unitaire et en tout degré $k$, le spectre du laplacien dans les $k$-formes. Les résultats sont exactement analogues à ceux de \cite{Inventiones}~: les plus petites valeurs propres sont des entiers explicites ; les autres valeurs propres admettent une borne inférieure que l'on détermine, d'une part (ceci avait été fait dans \cite{Asterisque} dans le cas unitaire) en supposant la \og conjecture de Ramanujan\fg\ pour $\GL(n)$ puis --- argument inconditionnel --- à l'aide de l'approximation connue de celle-ci. Dans le cas quaternionien, nos résultats sont moins précis en ce sens que nous ne décomposons pas le spectre selon les différents $M$-types.\footnote{Les résultats de Pedon \cite{Pedon} montrent que ceci est déjà tr\`{e}s difficile pour le spectre tempéré.} Dans les deux cas nos résultats impliquent toutefois le théorème suivant. 

\begin{thm} \label{T0}
Il existe une nombre r\'eel $\varepsilon = \varepsilon (G) <1$, que l'on peut prendre égal à $0$ si on suppose la conjecture de Ramanujan, tel que pour tout $\Gamma$, si $\lambda>0$ est une valeur propre du laplacien dans les $k$-formes de $\Gamma\backslash G/K$, alors
\begin{enumerate}
\item $\lambda$ est un entier pair, ou
\item $\lambda \geq \alpha_k - \varepsilon$, où $\alpha_k$ est la borne inférieure du spectre (dit tempéré) du laplacien dans les $k$-formes de carré intégrable sur $G/K$.
\end{enumerate}
\end{thm}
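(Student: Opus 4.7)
La démonstration généralise la stratégie de \cite{Inventiones} et repose sur l'articulation entre la traduction représentationnelle du problème spectral et la classification endoscopique des représentations automorphes, due à Arthur pour $\Sp(n,1)$ (via un changement de base convenable) et à Mok pour $\SU(n,1)$.

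Le premier pas est la réduction standard : les valeurs propres $\lambda > 0$ du laplacien sur les $k$-formes de $\Gamma\backslash G/K$ se lisent comme valeurs propres du Casimir sur les représentations unitaires irréductibles $\pi \subset L^2_{\mathrm{disc}}(\Gamma\backslash G)$ contenant un $K$-type intervenant dans $\wedge^k \mathfrak{p}^*$. Puisque $\Gamma$ est de congruence, chaque telle $\pi$ s'inscrit, via Arthur-Mok, dans un paquet d'Arthur global associé à un paramètre $\psi : L_F \times \SL_2(\CC) \to {}^L G$ de la forme $\psi = \boxplus_i \mu_i \boxtimes \nu_{a_i}$, où les $\mu_i$ sont des représentations cuspidales convenablement auto-duales de $\GL_{n_i}$ et $\nu_{a_i}$ la représentation irréductible de dimension $a_i$ de $\SL_2(\CC)$.

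Deux cas se présentent. Si $\psi$ est tempéré (tous les $a_i = 1$), alors sous la conjecture de Ramanujan la composante $\pi_\infty$ est tempérée et apparaît faiblement dans la décomposition spectrale de $G/K$, d'où $\lambda \geq \alpha_k$ et $\varepsilon = 0$ ; inconditionnellement, les meilleures approximations vers Ramanujan pour $\GL$ (Luo-Rudnick-Sarnak, Kim-Sarnak et leurs extensions) bornent explicitement la non-temperation archimédienne, et le calcul direct du Casimir en fonction du caractère infinitésimal donne un $\varepsilon < 1$ admissible. Si $\psi$ est non tempéré (un $a_i \geq 2$), la présence du facteur $\SL_2$ non trivial contraint le caractère infinitésimal archimédien de $\pi_\infty$ à un décalage entier par rapport aux paramètres tempérés ; combiné à la description explicite des représentations des paquets d'Adams-Johnson susceptibles de porter un $K$-type de $\wedge^k \mathfrak{p}^*$, ceci force $\lambda$ à être un entier, en fait pair.

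L'obstacle principal est l'analyse archimédienne du cas non tempéré : il faut identifier les paquets locaux d'Arthur à l'infini (qui coïncident ici avec les paquets d'Adams-Johnson), déterminer quelles représentations peuvent porter un $K$-type donné de $\wedge^k \mathfrak{p}^*$, et en lire le caractère infinitésimal précis. Dans le cas unitaire, cette analyse conduit à une décomposition selon les $M$-types, exactement comme dans \cite{Inventiones}. Dans le cas quaternionien, une telle finesse $M$-typée n'est pas actuellement accessible (conformément à la remarque renvoyant à \cite{Pedon}) ; on se contente alors d'estimations globales sur le caractère infinitésimal, qui suffisent néanmoins à établir la dichotomie du Théorème \ref{T0}.
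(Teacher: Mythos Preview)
Your dichotomy between tempered and non-tempered Arthur parameters is not the one that drives the theorem, and the non-tempered branch contains a genuine gap. The claim that a non-tempered $\psi$ (some $a_i \ge 2$) forces $\lambda$ to be an even integer is false: in the paper's quaternionic analysis, Cas~B has a component $\chi_i \otimes r_i$ with $\chi_i(z)=(z\bar z)^{p_i}$ and $\dim r_i = 2$, so $\psi$ is non-tempered, yet $\lambda = -4p_i^2 + C$ with $C$ an integer, which for real $p_i$ with $0<|p_i|<\eta_N$ is not an integer at all. Your appeal to Adams--Johnson packets presupposes that the archimedean $\chi_i$ are unitary, i.e.\ Ramanujan for the cuspidal $\mu_i$ on $\GL(m_i)$ with $m_i>1$ --- exactly what is unavailable unconditionally. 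Even in the tempered branch, the bound $\varepsilon<1$ is not justified as written: if several $\chi_i$ sat simultaneously at distance $\eta_N$ from unitarity, the Casimir defect would accumulate.

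The missing ingredient is the paper's use of the rank-one structure of $G$: any $\pi$ contributing to $k$-forms lies in a principal series $I(\sigma,s)$ with $\sigma$ a representation of the \emph{compact} group $M$, so its infinitesimal character $P$ has all but at most two coordinates in $\tfrac12\ZZ$. Transported to $\psi$ (via the character-identity lemma equating the infinitesimal character of each $\pi\in\Pi(\psi)$ with that of $\psi$), this forces at most two $\chi_i$ in $\psi_\CC$ to have non-half-integral exponent, each with $n_i\le 2$; Luo--Rudnick--Sarnak applied to those \emph{few} characters is what yields $\varepsilon = 4\eta_N^2 < 1$. The even-integer alternative occurs exactly when all coordinates of $P$ are half-integral, a condition orthogonal to the tempered/non-tempered splitting of $\psi$. (A lesser omission: Arthur--Mok classify the spectrum of the quasi-split form $G^*$, and the passage from $G$ to $G^*$ requires the stabilization/destabilization argument of \cite[\S4--5]{Inventiones}.)
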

Noter que $\alpha_k \geq 1$.

Dans le cas du spectre sur les fonctions on obtient en outre une énumération optimale des entiers qui peuvent intervenir dans le premier cas. Les démonstrations sont ici en tout point similaires à celles de \cite{Inventiones}, dont nous n'avons pas reproduit enti\`{e}rement les arguments relatifs à la formule des traces : on les retrace brièvement, pour les groupes unitaires, dans le \S~\ref{S1} et pour les groupes $\Sp(n,1)$ dans le \S~\ref{S3}. Rappelons le point clé : à l'aide de la formule de Masushima (généralisée, cf. \cite[Ch.1]{Asterisque}) le spectre du laplacien se ramène au calcul du caractère infinitésimal des représentations de $G$ apparaissant dans $L^2(\Gamma\backslash G)$, et ce calcul, à son tour, peut être fait pour la forme quasi-déployée  $G^*$ de $G$. Or, quand $G$ est de rang 1, les propriétés d'intégralité du caractère infinitésimal dues à l'existence d'un grand sous-groupe compact maximal restreignent considérablement les représentations de $G^*$ qui nous concernent. Par ailleurs, le spectre discret de $G^*$ a été déterminé par Mok \cite{Mok} (cas unitaire) et par Arthur \cite{ArthurBook} (cas des groupes orthogonaux quaternioniens) : leurs résultats nous permettent de conclure.\footnote{N.B. :  dans tous les calculs relatifs aux laplaciens, il s'agit du laplacien \emph{positif} : celui (normalisé de la façon indiquée) dont les valeurs propres sont positives.}

Dans le \S~\ref{S2}, nous décrivons quelques conséquences topologiques du théorème \ref{T0}. Ces conséquences ont déjà été annoncées dans le cas unitaire lorsque le groupe est anisotrope; on tirerait des conséquences similaires dans le cas quaternionien. Nous préférons nous restreindre au cas unitaire mais traitons plus particulièrement 
le cas isotrope; cela nous permet en effet de généraliser un résultat récent d'Arvind Nair \cite{Nair}. Le théorème obtenu est une version optimale de la \og propriété de Lefschetz \fg \ pour l'application de restriction entre variétés de Shimura unitaires; cf. Théorème~\ref{lef}.

Comme on l'a dit, nos démonstrations sont inconditionnelles ; on a soigneusement isolé les estimées, meilleures, reposant sur la conjecture de Ramanujan. En revanche, notre point de départ --- la description du spectre des groupes quasi-déployés, due à Arthur et Mok --- repose, en l'état actuel de la théorie, sur la stabilisation supposée de la formule des traces (de Selberg) tordue. Voir \cite[Chapitre 4]{ArthurBook} pour une discussion plus précise. Noter que cette stabilisation est maintenant annoncée par Waldspurger dans son exposé au congrès international de Séoul; la stabilisation attendue est conséquence de travaux de Waldspurger et Moeglin-Waldspurger. 

\section{Résultats spectraux : groupes unitaires} \label{S1}

Dans ce paragraphe $G=\SU(n,1)$, $K$ est son sous--groupe compact maximal $\mathrm{S}(\mathrm{U}(n)\times \mathrm{U}(1))$, et $X=G/K$.  Noter que $X$ est aussi égal à $\mathrm{U}(n,1)/\mathrm{U}(n)\times \mathrm{U}(1)$. Une variété hyperbolique de congruence est un quotient de $X$ obtenu de la façon suivante : soit $F$ une extension totalement réelle de $\QQ$, $E$ une extension quadratique $CM$ de $F$, et $G$ un $F$-groupe dont l'extension des scalaires $G_E$ est le groupe des unités d'une algèbre simple centrale qui peut \^{e}tre l'alg\`{e}bre des matrices. On suppose de plus que $G(F\otimes_\QQ \RR)$ est isomorphe à $\mathrm{U}(n,1) \times \mathrm{U}(n+1)^{d-1}$ où $d=[F:\QQ]$; on désigne par $v_0$ la place de $F$ correspondant au facteur non compact. Si $\Gamma_0 \subset G(F)$ est un sous-groupe de congruence, et $\Gamma$ sa projection sur $\mathrm{U}(n,1)$, $\Gamma\backslash X$ est un espace hyperbolique de congruence. (La différence entre les groupes unitaire et spécial unitaire n'est pas ici pertinente.)

Soit $B$ la forme de Killing de $\mathfrak{g}_0=\mathrm{Lie}_\RR(G)$. Un calcul standard montre que $B(X,Y)=2 (n+1) \mathrm{Tr}(XY)$ ($X,Y \in \mathfrak{g}_0$). 
On préfère --- comme dans \cite[Ch.~4]{Asterisque} --- renormaliser $B$ et considérer la forme invariante 
$$\langle X, Y\rangle = \frac12 \mathrm{Tr} (XY).$$
Noter qu'en identifiant de manière naturelle l'espace tangent \`a $X$ en $eK$ \`a l'espace vectoriel $\CC^n$, la forme invariante $\langle , \rangle$ induit sur $\CC^n$ sa forme 
hermitienne usuelle. La forme $\langle , \rangle$ définit canoniquement la structure hermitienne (donc riemannienne) sur $X$ ainsi que le laplacien (riemannien) $\Delta_k$ sur les $k$-formes.
Pour cette normalisation les courbures sectionnelles de $X$ sont comprises entre $-4$ et $-1$. Rappelons enfin que $G$ contient un parabolique $P=MAN$ avec $A\cong \RR_+^\times$ et $M\cong \mathrm{U}(1)\times \mathrm{U}(n-1)$ \cite[Ch.~4]{Asterisque}. Enfin, on pose $N=n+1$ (malgré le conflit de notation) et $\eta_N=\frac{1}{2} - \frac{1}{N^2+1}$.

Fixons $k\le n$, ainsi qu'un type de Hodge $(p,q)$ avec $p+q=k$. Les représentations unitaires irréductibles de $G$ qui contribuent aux formes sur $\Gamma\backslash X$ de type $(p,q)$ sont des séries discrètes (qui n'interviennent pas ici, la valeur propre correspondante de $\Delta_k$ étant nulle) ou sont contenues dans l'induite d'une représentation de $M$, donnée par des paramètres~$a,b$~: \cite[\S~4.5]{Asterisque}.

\begin{thm} \label{T1}
Les valeurs propres $\lambda$ de $\Delta_k$ dans les formes de type $(p,q)$ appartiennent à l'ensemble suivant :
\begin{itemize}
\item[(i)] $\lambda=(n-a-b)^2 - (n-a-b-2k)^2$ où $a\le p$, $b\le q$, $p-q-(a-b)\in \{-1,0,1\}$ et $0\le k\le [\frac{n-a-b}{2}]$
\medskip
\item[(ii)] $\lambda\ge (n-a-b)^2-4\eta_N^2=(n-a-b)^2-(\frac{N^2-1}{N^2+1})^2$, $a,b$ comme en (i).

Si on suppose la conjecture de Ramanujan, (ii) est remplacée par :
\medskip
\item[(iii)] $\lambda\ge (n-a-b)^2$, $a,b$ comme en (i).
\end{itemize}
\end{thm}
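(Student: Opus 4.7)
Le plan consiste à appliquer la formule de Matsushima généralisée (cf. \cite[Ch.~1]{Asterisque}) qui ramène le calcul des valeurs propres de $\Delta_k$ sur les formes de type $(p,q)$ de $\Gamma\backslash X$ au calcul du caractère infinitésimal des représentations unitaires irréductibles $\pi$ de $G$ intervenant discrètement dans $L^2(\Gamma\backslash G)$ et dont la cohomologie relative $H^{p,q}(\mathfrak{g},K;\pi)$ est non nulle. D'après Vogan--Zuckerman, une telle $\pi$ est soit une série discrète (donnant $\lambda=0$, cas exclu ici), soit une représentation cohomologique $A_{\mathfrak{q}}$ induite depuis un sous-groupe de Levi. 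Comme énuméré au \S~4.5 de \cite{Asterisque}, les constituants pertinents sont paramétrés par un couple $(a,b)$ satisfaisant $a \le p$, $b \le q$ et $p-q-(a-b) \in \{-1,0,1\}$ ; à chaque tel couple correspond une induite parabolique depuis $P=MAN$ d'une représentation irréductible de $M$, munie d'un paramètre complexe $s$ sur $A$.

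Ensuite, un calcul standard de Casimir via la formule d'Harish-Chandra donne, avec la normalisation choisie, $\lambda = (n-a-b)^2 - s^2$ sur cette induite. Les représentations tempérées correspondent à $s$ purement imaginaire, fournissant d'emblée $\lambda \ge (n-a-b)^2$ (cas (iii) sous la conjecture de Ramanujan) ; les $s$ réels de la série complémentaire et les valeurs discrètes isolées devront être restreints par des arguments automorphes.

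C'est ici qu'intervient le point crucial : on transfère $\pi$ à une représentation automorphe du groupe unitaire quasi-déployé $G^*$. Quitte à localiser en une place finie où l'algèbre centrale simple se déploie, la classification d'Arthur--Mok \cite{Mok} exhibe le spectre discret de $G^*$ selon des paramètres globaux $\psi = \boxplus_i \pi_i[n_i]$, sommes formelles de cuspidales conjuguées-autoduales de $\GL$ sur l'extension $CM$. Le caractère infinitésimal cohomologique étant entier, et $G$ étant de rang réel $1$ avec un grand sous-groupe compact maximal, les paramètres $\psi$ admissibles sont rares : soit ils contiennent un facteur $\pi_i[n_i]$ avec $n_i \ge 2$, forçant $s = n-a-b-2k$ pour un certain $k \in \{0,\ldots,[(n-a-b)/2]\}$, ce qui fournit le cas (i) ; soit ils sont entièrement tempérés sur $\GL(N)$, et la meilleure approximation connue de Ramanujan (d'exposant $\eta_N = \tfrac{1}{2} - \tfrac{1}{N^2+1}$) fournit alors la borne du cas (ii).

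Le principal obstacle est l'analyse combinatoire précise des paramètres d'Arthur compatibles avec la cohomologie : il faut vérifier qu'aux contraintes d'intégralité imposées par le caractère infinitésimal correspondent exactement les couples $(a,b)$ et les valeurs $s = n-a-b-2k$ de (i), en utilisant que le $K$-type cohomologique impose la condition $p-q-(a-b) \in \{-1,0,1\}$. Les arguments sont formellement parallèles à ceux de \cite{Inventiones} développés pour les groupes orthogonaux ; la transposition au cadre unitaire demande essentiellement de remplacer l'auto-dualité par la conjugaison-dualité relative à l'extension $CM$ et d'invoquer \cite{Mok} à la place de \cite{ArthurBook}.
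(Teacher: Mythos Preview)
Your overall strategy matches the paper's, but the opening step contains a genuine error that would derail the argument if taken literally.

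You write that Matsushima reduces the problem to those $\pi$ with $H^{p,q}(\mathfrak{g},K;\pi)\neq 0$ and then invoke Vogan--Zuckerman. That is not the right condition: representations with nonzero $(\mathfrak{g},K)$-cohomology are exactly the cohomological $A_{\mathfrak{q}}$ modules, and those all contribute $\lambda=0$. The correct condition (cf.\ the proof of Theorem~\ref{TU}) is $\mathrm{Hom}_K(\tau_{p,q},\pi)\neq 0$, i.e., $\pi$ merely contains the $K$-type attached to $(p,q)$-forms; this is a far weaker requirement, satisfied by entire principal-series families $I(\sigma_{a,b},s)$ with $s$ ranging over a continuum. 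Your subsequent discussion (the $(a,b)$ of \cite[\S~4.5]{Asterisque}, the continuous parameter $s$, the eigenvalue formula) is in fact the correct picture, so perhaps you only misspoke; but the appeal to Vogan--Zuckerman must be dropped.

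On the global side your sketch elides several points the paper treats with care. The passage from $G$ to $G^*$ is not ``localising at a split finite place'' but the full stabilisation/destabilisation of the trace formula from \cite[\S\S~4--5]{Inventiones}; one also needs the lemma, proved here via Mok's character identities and the surjectivity of the norm map, that every $\pi$ in an Arthur packet $\Pi(\psi)$ has the infinitesimal character determined by $\psi$. Your dichotomy ``some $n_i\ge 2$ versus entirely tempered'' is not the paper's: the $K$-type already forces $n-1$ of the $n+1$ coordinates of $P$ into $\tfrac{1}{2}\ZZ$, and the actual split is whether the remaining two are also half-integers (then \cite[\S~6.2]{Asterisque} yields case~(i)) or not, in which case at most two exceptional characters $\chi_j$ with $n_j\le 2$ carry the non-half-integer exponents and the Luo--Rudnick--Sarnak bound on those gives~(ii). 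Finally, you do not address the isotropic case ($F=\QQ$, $G$ of $\QQ$-rank~$1$), where the continuous spectrum and the Levi contribution to $I_{{\rm disc},t}^G$ must be controlled separately; the paper checks that both are again governed by Arthur--Mok parameters of the same shape.
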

\begin{proof}[Esquissons la démonstration] Supposons d'abord $G$ anisotrope (comme $F$-groupe). Notons $G^*$ la forme intérieure de $G$ sur $F$, et parfois par abus de notation sur $\RR$. Ainsi $G^*(\RR)=\mathrm{U}(r,s)$ où $|r-s|\le 1$, $r+s=n+1$. Mok \cite{Mok}, étendant les résultats d'Arthur au cas unitaire, a paramétré les représentations de $G^*(F_\infty)$ apparaissant dans le spectre discret (pour $G^*(F)$) par des paramètres d'Arthur relatifs à $\mathrm{U}(r,s)$. On  renvoie à \cite[Chap.~6]{Asterisque} pour une description détaillée. Un tel paramètre, $\psi$, définit un caractère infinitésimal pour $\mathrm{U}(r,s)$ et aussi pour $\mathrm{U}(n,1)$ \cite[Lemme~6.1]{Asterisque}. Il définit aussi un paquet d'Arthur $\Pi(\psi)$ de représentations irréductibles de $G^*(\RR)=\mathrm{U}(r,s)$. On a alors (cf. \cite[Lemme~3.4]{Inventiones})~:

\begin{lem}
Si $\pi \in \Pi(\psi)$, le caractère infinitésimal $\nu$ de $\pi$ est celui associé~à~$\psi$.
\end{lem}

Ceci résulte de l'identité de caractères  entre (paquet de) représentations de $G^*(\RR)$ et la représentation correspondante de $\GL(N,\CC)$, cf. Mok \cite[Théorème 2.5.1]{Mok}. Soit $\mathcal{Z}_{\mathbf{R}}$ le centre de l'alg\`{e}bre enveloppante de $\mathfrak{g}_0\otimes \mathbf{C}$ et $\mathcal{Z}_{\mathbf{C}}$ l'analogue pour $\GL(N,\mathbf{C})$. Il existe un homomorphisme surjectif $N: \mathcal{Z}_{\mathbf{C}} \rightarrow \mathcal{Z}_{\mathbf{R}}$ \cite{ClozelThese}. Un param\`{e}tre $\nu$ du dual d'une alg\`{e}bre de Cartan pour $G^*(\mathbf{R})$, d\'{e}finissant, {\it via} l'homomorphisme d'Harish-Chandra, un caract\`{e}re de $\mathcal{Z}_{\mathbf{R}}$, donne par composition un caract\`{e}re de $\mathcal{Z}_{\mathbf{C}}$, qu'on lui identifie. Notons $h_{\mathbf{R}}$, $h_{\mathbf{C}}$ les homomorphismes d'Harish-Chandra.

Il r\'{e}sulte des identités de caract\`{e}res dans le cas tempéré \cite{ClozelThese} et des arguments donnés dans \cite[Chapitre 1]{ArthurClozel} et \cite[Appendice]{ClozelDelorme} que, pour $\varphi$ sur $G(\mathbf{C})$ et $f$ sur $G( \mathbf{R})$ associées par les identités de changement de base stable, $z\varphi$ et $(Nz)f$ sont associées ($z \in \mathcal{Z}_{\mathbf{C}})$.

Considérons alors l'identité de caract\`{e}res de Mok \cite[Théor\`{e}mes 2.5.1, 3.21]{Mok}~:
\begin{equation} \label{E1}
\mathrm{trace} (\Pi (\varphi) I_{\sigma}) = \sum_{\pi} \varepsilon(\pi) m(\pi) \mathrm{trace} \ \pi(f).
\end{equation}
Ici $\Pi$ est l'unique représentation de $\GL(N,\mathbf{C})$ associée \`{a} $\psi$, $\pi$ parcourt $\Pi(\psi)$,  $m(\pi)>0$ est une multiplicité, et $\varepsilon(\pi)$ est un signe. Si l'on remplace $\varphi$ par $z\varphi$ dans \eqref{E1}, on en déduit, $\nu$ étant le param\`{e}tre infinitésimal de $\Pi$~:
\begin{equation} \label{E2}
\sum_{\pi} \varepsilon(\pi) \mathrm{trace} \ \pi(zf) = \sum_{\pi} \varepsilon(\pi) h_{\mathbf{R}}(z) (\nu) \mathrm{trace} \ \pi (f)
\end{equation}
pour toute $f$ associée \`{a} $\varphi$. Pour les groupes unitaires, la norme entre classes de conjugaison tordue dans $G(\mathbf{C})$ et classes de conjugaison stable dans $G(\mathbf{R}$) est surjective. L'identité \eqref{E2} détermine donc la somme de caract\`{e}res de droite au voisinage de tout élément régulier de $G(\mathbf{R})$. Les caract\`{e}res étant linéairement indépendants et $\varphi \leadsto f$ localement surjective, il en résulte que le caract\`{e}re infinitésimal de chaque $\pi$ est égal \`{a} $\nu$. 

Nous pouvons maintenant imiter mot pour mot les arguments de \cite[\S~4.5]{Inventiones}. A l'issue de la 
stabilisation (\S~4) puis de la déstabilisation (\S~5) de la formule des traces pour $G(F)$, on voit que le caractère infinitésimal $\nu$ d'une représentation de $G(F_{v_0})=\mathrm{U}(n,1)$ est la somme de caractère $\nu_i$ de représentations associées à des paramètres $\psi_i$, de rang inférieur, de groupes unitaires quasi-déployés $G_i^*$ sur $F$ de rangs $N_i$ $(\sum N_i=N)$. Mais la somme directe des $\psi_i$ est un paramètre $\psi$ pour $G^*$. Ainsi :

\begin{lem}
Si $\pi$ est une représentation de $G(F_{v_0})=\mathrm{U}(n,1)$ apparaissant dans $L^2(\Gamma\backslash G(F_{v_0}))$, le caractère infinitésimal $\nu$ de $\pi$ provient d'un paramètre $\psi$ pour $\mathrm{U}(r,s)$.
\end{lem}

Nous sommes alors ramenés à la démonstration donnée dans \cite[\S~6.2]{Asterisque}. Rappelons que le paramètre archimédien $\psi_\CC$ définit par restriction à $\CC^\times \times \SL(2,\CC)$
\begin{equation} \label{E3}
\psi_\CC = \sum_j \chi_j \otimes r_j
\end{equation} 
où $\chi_j$ est un caractère de $\CC^\times$, $r_j$ une représentation de degré $n_j$ de $\SL(2,\CC)$, et $n+1=\sum n_j$. Les caractères $\chi_j$ vérifient l'estimée connue de Ramanujan pour les représentations cuspidales, {\it i.e.}
\begin{equation} \label{E4}
\chi_j(z) = z^{p_j}\overline{z}^{q_i},\ |\mathrm{Re}\frac{(p_j+q_j)}{2}| \leq \frac{1}{2} -\frac{1}{N^2+1}\ (N=n+1). 
\end{equation} 
La conjecture de Ramanujan est $\mathrm{Re}(p_j+q_j)=0$.

Le caractère infinitésimal déduit de \eqref{E3}, un élément de $\CC^{n+1}/\mathfrak{S}_{n+1}$, est
\begin{equation*}
P=(p_j+i), \  j=1,\ldots,r, \ i=\frac{1-n_j}{2}, \frac{3-n_j}{2}, \ldots , \frac{n_j-1}{2},\ i\equiv \frac{n_j-1}{2}\ [1].
\end{equation*}
Or $P$ a $(n-1)$ coordonnées appartenant à $\frac{1}{2}\ZZ$. Si toutes ses coordonnées sont des demi-entiers, la démonstration du théorème est donnée dans \cite[\S~6.2]{Asterisque} ; on est alors dans le cas (i). Dans le cas inverse, 
il pourrait intervenir dans la décomposition \eqref{E3} un caractère $\chi=\chi_i$ avec $p=p_i\notin \frac{1}{2}\ZZ$, $n_i$ étant égal à $2$ ; et deux caractères $\chi,\chi'$ avec multiplicité 1. Le premier cas est éliminé (inconditionnellement) dans \cite[p.~70]{Asterisque}. Dans le second, $\chi=z^p(\overline{z})^q$ vérifie la majoration \eqref{E4}, donc est de la forme $(z/\overline{z})^m(z\overline{z})^s$ avec $m\in \frac{1}{2}\ZZ$, $s$ réel soumis à \eqref{E4}.  L'argument du corollaire~6.2.7 de \cite{Asterisque} donne alors la minoration (ii).\footnote{Tel quel le  corollaire 6.2.7 de \cite{Asterisque} est incorrect : il ne donne que les valeurs propres \og discrètes\fg\ et omet le spectre \og continu\fg\ donné par (iii) du théorème~\ref{T1}, qui apparaît évidemment même sous la conjecture de Ramanujan.}

Il nous reste à considérer le cas où $G$ est (globalement) isotrope. Vu la nature de $G(F_\infty)$,  ceci implique que $F=\QQ$ et que $G$ est un groupe unitaire de rang (rationnel) 1 sur $\QQ$, déployé sur un corps quadratique imaginaire $E$. Dans ce cas, $L^2(\Gamma\backslash G(\RR))$ contient un spectre continu, et le théorème~\ref{T1} reste vrai pour celui-ci. Considérons d'abord le spectre discret. Le point de départ des arguments de stabilisation/déstabilisation de \cite[\S~4--5]{Inventiones} est la formule d'Arthur (cf. \cite[\S~5.7]{Inventiones})
\begin{equation*}
\begin{split}
I_{{\rm disc},t}^G(f) & := \mathrm{trace}(f\mid L_{{\rm disc},t}^2(G(F)\backslash G(\Ab_F)) \\
& \quad \quad +\sum_M \frac{|W_0^M|}{|W_0^G|} \sum_{s\in W(M)_{reg}}|\det(s-1)|^{-1} \mathrm{trace}(M_p(s,0)I_{p,t}(0,f))\\
&=\sum_H \iota (G,H) S_{disc,t}^H (f^H)\,.\hskip5cm
\end{split}
\end{equation*}
Rappelons qu'ici $t>0$ paramètre la norme de la partie imaginaire du caractère infinitésimal. On renvoie à Arthur ainsi qu'à \cite[\S~4, \S~6]{Inventiones} pour les détails. L'argument précédent a consisté à vérifier que la somme endoscopique $\sum_H$ est concentrée en un
caractère infinitésimal donné par un paramètre d'Arthur, donc impliquant les estimées (i) ainsi que (ii) ou (iii) selon le cas. Pour obtenir le résultat pour le spectre discret, il nous suffit donc de comprendre les caractères infinitésimaux apparaissant dans le terme complémentaire (somme sur les sous-groupes de Levi propres) de l'expression de $I_{{\rm disc},t}^G(f)$. Il n'apparaît ici qu'un seul sous-groupe, $M\cong \GL(1,E) \times \mathrm{U}(n-1)$, le groupe $\mathrm{U}(n-1)$ étant anisotrope. La représentation induite est somme d'induites unitaires $\mathrm{ind}_{P(\Ab)}^{G(\Ab)}\chi\otimes \tau$, $P$ étant un parabolique de sous-groupe de Levi $M$, $\chi$ un caractère de $E^\times\backslash\Ab_E^\times$ et $\tau$ une représentation (cuspidale) de $\mathrm{U}(n-1,\Ab)$. Il n'y a qu'un élément régulier dans $W(M)$, envoyant $(\chi,\tau)$ vers $(\chi^{-c},\tau)$ où $c : E^\times \rightarrow E^\times$ est la conjugaison complexe. Le terme complémentaire concerne donc les données $(\chi,\tau)$ telles que $\chi^c =\chi^{-1}$, {\it i.e.} $\chi$ est une donnée unitaire (au sens de Mok) pour $\GL(1,E)$. (Le paramètre complet d'Arthur--Mok est alors $\chi \boxplus \chi\boxplus \tau_E$ où $\tau_E$ est la représentation, associée à $\tau$, de $\GL(n-1,\Ab_E)$). Le terme complémentaire est donc associé à une donnée d'Arthur--Mok, et les arguments précédents permettent de conclure. Enfin, une représentation du spectre continu est de même, d'après Langlands, induite à partir d'une donnée $(\chi,\tau)$ où $\chi$ est maintenant un caractère unitaire de $E^\times\backslash \Ab_E^\times$ {\it au sens usuel} : $|\chi(z)|=1$, $z\in \Ab_E^\times$. La donnée associée est alors $\chi\boxplus\chi^{-c} \boxplus \tau_E$ et a les mêmes propriétés. 
\end{proof}

\medskip

On peut expliciter un peu plus le théorème \ref{T1}. 
L'espace sym\'etrique $X$ est un domaine born\'e de $\CC^{N}$. Soit $k (z_1 , z_2)$ le noyau de Bergmann de $X$. Il lui correspond la forme de K\"ahler 
$\omega (z) = \partial \overline{\partial} \log k(z,z)$. Chaque quotient $\Gamma \backslash X$ est une vari\'et\'e complexe hermitienne sur lequel la forme $(1/2i \pi) \omega$
induit une forme de type $(1,1)$. En plus des op\'erateurs $\partial$, $\overline{\partial}$ et de leurs adjoints, on dispose donc de l'op\'erateur {\it de Lefschetz} $L$, cup-produit avec $\omega$, et de son adjoint $\Lambda$ sur l'espace $\Omega^*_{(2)} (\Gamma)$ des formes diff\'erentielles de carr\'e int\'egrable sur $\Gamma \backslash X$. 

Fixons maintenant un entier $r \leq n$, ainsi qu'un type de Hodge $(p,q)$ avec $p+q = r$. Le sous-espace $\Omega^{p,q}_{(2)} (\Gamma )$ des formes diff\'erentielles de type $(p,q)$
se d\'ecompose en 
\begin{equation} \label{dec1}
\Omega^{p,q}_{(2)} (\Gamma) = \oplus_{k=0}^{\min (p,q)} L^k (P^{p-k,q-k}_{(2)} (\Gamma)), 
\end{equation}
o\`u $P^{p-k,q-k}_{(2)} (\Gamma) = \Omega^{p-k,q-k}_{(2)} (\Gamma) \cap \mathrm{Ker} \Lambda$ d\'esigne le sous-espace des formes {\it primitives} dans $\Omega^{p-k,q-k}_{(2)} (\Gamma)$. 

Il d\'ecoule enfin des identit\'es de Hodge que le sous-espace $P^{p,q}_{(2)} (\Gamma)$ se d\'ecompose lui-m\^eme en une somme directe 
\begin{equation} \label{dec2}
P^{p,q}_{(2)} (\Gamma) = K^{p,q}_{(2)} (\Gamma ) \oplus \partial (K^{p-1,q}_{(2)} (\Gamma )) \oplus \overline{\partial} (K^{p,q-1}_{(2)} (\Gamma )) \oplus \partial \overline{\partial} (K^{p-1 , 
q-1}_{(2)} (\Gamma)) , 
\end{equation}
o\`u $K^{p,q}_{(2)} (\Gamma )$ d\'esigne le sous-espace des formes primitives qui sont \`a la fois $\partial$ et $\overline{\partial}$ coferm\'ees, autrement dit
\[K^{p,q}_{(2)} (\Gamma ) = P^{p,q}_{(2)} (\Gamma) \cap \mathrm{Ker} (\partial^*) \cap \mathrm{Ker} (\overline{\partial}^*) .\]

Puisque les d\'ecompositions \eqref{dec1} et \eqref{dec2} sont invariantes par le laplacien, pour d\'ecrire compl\`etement le spectre de ce dernier il suffit de d\'ecrire le spectre
sur les formes primitives de degr\'e $r<n$ qui sont \`a la fois $\partial$ et $\overline{\partial}$ coferm\'ees. C'est l'objet du th\'eor\`eme suivant qui se déduit du théorème \ref{T1}.

\begin{thm} \label{TU}
Soient $r$ un entier $< n$ et $(p,q)$ un type de Hodge avec $p+q = r$. Les valeurs propres $\lambda$ du laplacien $\Delta_r$ dans les formes {\rm primitives} de type $(p,q)$
qui sont \`a la fois {\rm $\partial$ et $\overline{\partial}$ coferm\'ees} appartiennent \`a l'ensemble 
\[ \bigcup_{k=0}^{[\frac{n-r}{2}]} \left\{ (n-r)^2 - (n-r-2k)^2 \right\} \bigcup \left[(n-r)^2 - \left(\frac{N^2-1}{N^2+1}\right)^2 , +\infty \right[ .\] 
\end{thm}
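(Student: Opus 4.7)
Le plan est de combiner le Théorème \ref{T1} avec un raffinement de la formule de Matsushima, qui identifie le sous-espace des formes primitives $\partial$- et $\overline\partial$-cofermées dans la décomposition spectrale de $\Omega^{p,q}_{(2)}(\Gamma)$ avec la contribution des représentations dont le paramètre, au sens du Théorème \ref{T1}, vaut exactement $(a,b)=(p,q)$.

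Je ferais d'abord appel à la formule de Matsushima
\[
\Omega^{p,q}_{(2)}(\Gamma) = \bigoplus_\pi m(\pi,\Gamma)\cdot\mathrm{Hom}_K\bigl(\wedge^{p,q}\mathfrak{p}^*,\pi_K\bigr),
\]
où $\pi$ parcourt les représentations unitaires irréductibles de $G$ apparaissant dans $L^2(\Gamma\backslash G)$; chaque facteur est un sous-espace propre de $\Delta_r$ et sa valeur propre est contrainte par le Théorème \ref{T1}. Les opérateurs $L,\Lambda,\partial,\overline\partial$ et leurs adjoints agissent sur chaque facteur via des opérateurs $K$-équivariants sur $\wedge^{p,q}\mathfrak{p}^*$; les décompositions \eqref{dec1} et \eqref{dec2} raffinent ainsi la décomposition de Matsushima facteur par facteur, ce qui fournit
\[
K^{p,q}_{(2)}(\Gamma) = \bigoplus_\pi m(\pi,\Gamma)\cdot K^{p,q}(\pi),
\]
avec $K^{p,q}(\pi) = \mathrm{Hom}_K(\wedge^{p,q}\mathfrak{p}^*,\pi_K)\cap\ker\Lambda\cap\ker\partial^*\cap\ker\overline\partial^*$.

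Le point clé est ensuite d'établir que $K^{p,q}(\pi)\ne 0$ entraîne $(a,b)=(p,q)$. Ceci devrait suivre de l'analyse des $K$-types développée dans \cite[\S~4.5]{Asterisque}: une représentation $\pi$ de paramètre $(a,b)$ admet son $K$-type cohomologique \og minimal\fg\ en bidegré $(a,b)$, et sa contribution en bidegré $(p,q)$ se déduit en appliquant $L^{p-a}$ (et, suivant les cas, $\partial$ ou $\overline\partial$ lorsque $p-q-(a-b)=\pm 1$) à ce $K$-type minimal. La contribution est donc contenue dans $L\cdot\Omega^{p-1,q-1}_{(2)}(\Gamma) + \partial\,\Omega^{p-1,q}_{(2)}(\Gamma) + \overline\partial\,\Omega^{p,q-1}_{(2)}(\Gamma)$ dès que $(a,b)\ne(p,q)$; elle est en particulier orthogonale à $K^{p,q}_{(2)}(\Gamma)$.

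Il suffit enfin de spécialiser le Théorème \ref{T1} à $(a,b)=(p,q)$: on a alors $n-a-b=n-r$, la première famille de valeurs propres devient $(n-r)^2-(n-r-2k)^2$ pour $0\le k\le [(n-r)/2]$, et la borne inconditionnelle $\lambda\ge (n-r)^2-((N^2-1)/(N^2+1))^2$ couvre le reste du spectre, ce qui est précisément l'énoncé du Théorème \ref{TU}. L'obstacle principal est la deuxième étape: l'identification rigoureuse, par analyse fine des $K$-types dans $\wedge^{p,q}\mathfrak{p}^*$ et de leur intersection avec les $K$-types présents dans les représentations (non tempérées) $\pi$ paramétrées comme dans le Théorème \ref{T1}, repose sur la description explicite des modules cohomologiques menée dans \cite[Ch.~4]{Asterisque}.
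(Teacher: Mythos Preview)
Your approach is essentially the paper's: Matsushima reduces the question to representations $\pi$ carrying the right $K$-type, the crux is that only the inducing parameter $(a,b)=(p,q)$ contributes to $K^{p,q}_{(2)}(\Gamma)$, and then Theorem~\ref{T1} specializes exactly as you describe. The paper makes your second step precise not via the heuristic on $L$, $\partial$, $\overline\partial$ but by Frobenius reciprocity applied to the irreducible $K$-type $\tau'_{p,q}$: its restriction to $M$ decomposes as $\sigma_{p,q}\oplus\sigma_{p-1,q}\oplus\sigma_{p,q-1}\oplus\sigma_{p-1,q-1}$, and \cite[Théorème~9.4.3]{Asterisque} identifies $\sigma_{p,q}$ as the unique $M$-type whose induced representation contributes to $K^{p,q}_{(2)}(\Gamma)$.
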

Si on suppose la conjecture de Ramanujan, on peut remplacer l'intervalle par $[(n-r)^2 , + \infty [$. 

\begin{proof}[D\'emonstration du th\'eor\`eme \ref{TU}]
Notons $\tau_r$ la repr\'esentation adjointe de $K$ dans le produit ext\'erieur $\wedge^r$ du complexifi\'e de l'espace tangent \`a $X$. 
Rappelons (voir par exemple \cite[Chapitre 4]{Asterisque}) que la repr\'esentation $\tau_r$ se d\'ecompose en $\oplus_{p+q=r} \tau_{p,q}$ et que chaque $\tau_{p,q}$ se d\'ecompose en irr\'eductibles 
$\oplus_k \tau_{p-k, q-k}'$ de sorte que la formule de Matsushima ram\`ene le calcul du spectre du laplacien dans $P^{p,q}_{(2)} (\Gamma)$ au calcul 
du caract\`ere infinit\'esimal des repr\'esentations $\pi$ de $G$ apparaissant dans $L^2 (\Gamma \backslash G)$ et telles que 
$\mathrm{Hom}_K (\tau_{p,q}' , \pi ) \neq \{ 0 \}$. 

Maintenant les repr\'esentations unitaires irr\'eductibles de $G$ qui contribuent aux formes dans $P^{p,q}_{(2)} (\Gamma)$ sont soit des s\'eries discr\`etes (qui n'interviennent pas ici, la valeur propre correspondante du laplacien $\Delta_r$ \'etant nulle) ou induites d'une repr\'esentation de $M\cong 
\U (1) \times \U (n-1)$. Il d\'ecoule de plus de la réciprocité de Frobenius que la repr\'esentation de $M$ est un $M$-type de $\tau_{p,q}'$. Or on a~:
$$(\tau_{p,q}')_{| M} = \sigma_{p,q} \oplus \sigma_{p-1,q} \oplus \sigma_{p,q-1} \oplus \sigma_{p-1 , q-1},$$
o\`u chaque $\sigma_{a,b}$ est une repr\'esentation irr\'eductible de $M$. Il d\'ecoule finalement de \cite[Th\'eor\`eme 9.4.3]{Asterisque} qu'une repr\'esentation
induite de $\sigma_{a,b}$ contribue \`a  $K^{p,q}_{(2)} (\Gamma)$ pr\'ecis\'ement quand $(a,b)=(p,q)$. Le th\'eor\`eme \ref{TU} se déduit alors de la d\'emonstration du théorème \ref{T1} telle qu'esquissée ci-dessus.
\end{proof}

Contrairement au cas hyperbolique r\'eel, nous ne connaissons pas de preuve que toutes les valeurs propres discr\`etes peuvent effectivement intervenir 
dans un quotient. Dans \cite[Th\'eor\`eme 6.5.1]{Asterisque} on donne une telle preuve lorsque $p=q$. Le cas g\'en\'eral devrait toutefois r\'esulter des r\'esultats d'Arthur et Mok mais 
nous ne l'avons pas vérifié. Modulo la conjecture de Ramanujan, le théorème \ref{TU} est en tout cas optimal pour le spectre du laplacien sur les fonctions, c'est-à-dire qu'il existe une variété hyperbolique complexe de congruence (compacte) 
$\Gamma \backslash X$ dont le spectre du laplacien dans les fonctions contient les valeurs propres 
\[n^2 - (n-2k)^2 \quad (k=0 , \ldots , \frac{n}{2}) .\] 


\section{Applications topologiques} \label{S2}

Il d\'ecoule en particulier du th\'eor\`eme \ref{TU} que la conjecture $A^-$ de \cite{Asterisque} --- ou encore la conjecture 2.3 de \cite{IMRN} --- est v\'erifi\'ee~:
pour tout entier $r \leq n$ la premi\`ere valeur propre non nulle $\lambda_1^r = \lambda_1^r (\Gamma)$ v\'erifie
\begin{equation} \label{trou}
\begin{split}
\lambda_1^r & \geq 4(n-r-1), \mbox{ si } r \neq n\pm 1, n, \mbox{ et } \\
& \geq 1 - \left(1-\frac{2}{(n+1)^2+1}\right)^2 = \left(\frac{2(n+1)}{(n+1)^2 +1} \right)^2, \mbox{ si } r = n\pm 1 \mbox{ ou } n.
\end{split}
\end{equation}

Ce r\'esultat a un certain nombre de cons\'equences sur la topologie des vari\'et\'es $\Gamma \backslash X$; cons\'equences que nous avons rassembl\'ees sous le nom de 
{\it propri\'et\'es de Lefschetz automorphes} dans \cite[Conjecture 1.1]{IMRN}. On renvoie \`a cet article pour les \'enonc\'es et leurs d\'emonstrations lorsque le quotient $\Gamma 
\backslash X$ est compact. On se contente ici de d\'etailler ce qui reste vrai lorsque le quotient $\Gamma \backslash X$ n'est plus suppos\'e compact. On est alors r\'eduit
\`a une situation concr\`ete, \'etudi\'ee en particulier par Nair \cite{Nair}~:

Soient $E\subset \CC$ un corps quadratique imaginaire, $V$ un espace vectoriel de dimension $n+1\geq 3$ sur $E$ et $h :V \times V \to E$ une forme hermitienne,
relativement \`a la conjugaison de $E/\QQ$, telle que $h$ soit de signature $(n,1)$ sur $V_{\RR} := V \otimes_{\QQ} \RR \cong \CC^{n+1}$. Un sous-groupe de congruence
$\Gamma \subset \U (h)$ op\`ere alors proprement discontinument sur l'espace sym\'etrique $X$. La vari\'et\'e quasi-projective $S=S(\Gamma)$ est de volume fini mais non compacte; elle contient des sous-vari\'et\'es de la m\^eme nature~: 
\`a tout sous-espace $W \subset V$ d\'efini sur $E$, de dimension $m+1$ et auquel la forme 
hermitienne se restreint en une forme non d\'eg\'en\'er\'ee et ind\'efinie, on associe en effet un sous-groupe $\U (h_{| W}) \subset \U (h)$. Posant 
$\Gamma_W = \Gamma \cap \U  (h_{| W})$, on obtient une vari\'et\'e quasi-projective $S_W = \Gamma_W \backslash X_W$ o\`u $X_W$ est le sous-espace
sym\'etrique associ\'e au groupe $\U  (h_{| W})$. Il existe en outre un morphisme de vari\'et\'es $S_W \to S$ qui est fini d'image une sous-vari\'et\'e ferm\'ee de codimension $n-m$. 

Oda \cite{Oda} a d\'emontr\'e qu'il existe un ensemble fini de sous-espaces $W_1 , \ldots , W_s$ comme ci-dessus, de dimension $m+1=2$ et tels que 
l'application de restriction 
$$H^1 (S, \QQ) \to \bigoplus_j H^1 (S_{W_j} , \QQ)$$
soit injective. La g\'en\'eralisation de ce r\'esultat \`a des classes de degr\'e $>1$ et \`a des sous-espaces de dimension $>1$ a \'et\'e consid\'er\'e par plusieurs
auteurs, voir \cite{HarrisLi,Venkataramana,IMRN,Nair}. Le th\'eor\`eme est d\'emontré par Nair \cite{Nair} lorsque $i \leq m-2$.

\begin{thm} \label{lef}
Pour tout $m<n$, il existe des sous-espaces $W_1, \ldots , W_s$ de $V$ de dimension $m+1$ tels que l'application de restriction
\begin{equation} \label{res}
H^i (S, \QQ) \to \bigoplus_j H^i (S_{W_j} , \QQ)
\end{equation}
soit injective pour $i\leq m$. 
\end{thm}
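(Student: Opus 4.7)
Je propose d'adapter la stratégie de \cite[\S~6]{IMRN} au cas non compact, en traitant séparément la partie intérieure et la partie d'Eisenstein de la cohomologie. On écrit d'abord, d'après Borel--Franke, une décomposition
$$H^i(S, \QQ) = H^i_!(S, \QQ) \oplus H^i_{\mathrm{Eis}}(S, \QQ) ;$$
la partie d'intérieur $H^i_!$ s'injecte, via Zucker, dans la cohomologie d'intersection $IH^i(\overline{S}^{\mathrm{BB}}, \QQ)$ de la compactification de Baily--Borel, donc dans la cohomologie $L^2$. Puisque $\U(n,1)$ est de $\QQ$-rang $1$, la partie d'Eisenstein est entièrement gouvernée par l'unique classe de conjugaison de sous-groupes paraboliques propres, de Levi $M \cong \GL(1,E) \times \U(n-1)$.

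Sur la cohomologie $L^2$, la formule de Matsushima ramène le calcul à la $(\mathfrak{g}, K)$-cohomologie des représentations $\pi$ apparaissant dans $L^2_{\mathrm{disc}}(\Gamma\backslash G(\RR))$. Ces représentations sont, par Vogan--Zuckerman, des $A_\mathfrak{q}(\lambda)$, et le trou spectral \eqref{trou} combiné au théorème~\ref{T1} force, pour $i \leq m < n$, les paramètres d'Arthur--Mok associés à provenir d'un groupe unitaire de rang strictement inférieur (essentiellement gouverné par $m$). Un argument de restriction de type Burger--Sarnak--Venkataramana, analogue à celui de \cite{IMRN} dans le cas anisotrope, fournit alors un ensemble fini de sous-espaces $W_1, \ldots, W_s$ non dégénérés de dimension $m+1$ tel que la restriction
$$H^i_!(S, \QQ) \to \bigoplus_j H^i_!(S_{W_j}, \QQ)$$
soit injective pour tout $i \leq m$.

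Il reste à traiter la cohomologie d'Eisenstein. Les représentations qui y contribuent sont, d'après la description donnée à la fin de la démonstration du théorème~\ref{T1}, des induites $\mathrm{ind}_{P(\Ab)}^{G(\Ab)}(\chi \otimes \tau)$ associées à une donnée d'Arthur--Mok de la forme $\chi \boxplus \chi^{-c} \boxplus \tau_E$. En choisissant les $W_j$ de sorte qu'ils rencontrent convenablement les sous-espaces isotropes maximaux de $V$ (via l'approximation forte), on peut assurer que l'application de restriction à $\bigoplus_j S_{W_j}$ se factorise à travers les séries d'Eisenstein correspondantes de $\U(h_{|W_j})$, et qu'elle est injective sur la partie $H^i_{\mathrm{Eis}}$. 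En combinant les deux étapes (quitte à remplacer la famille $\{W_j\}$ par une réunion des deux familles obtenues), on en déduit l'injectivité de \eqref{res}.

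La difficulté principale est la coordination des choix faits pour la partie intérieure et pour la partie d'Eisenstein, en veillant à ce que les mêmes $W_j$ conviennent simultanément. C'est à ce niveau qu'intervient de manière essentielle la valeur optimale du trou spectral \eqref{trou} fournie par le théorème~\ref{TU}~: elle élimine suffisamment de représentations parasites pour que les deux degrés critiques $i = m-1$ et $i = m$ --- non atteints par l'argument de Nair \cite{Nair} --- puissent être inclus dans l'injectivité.
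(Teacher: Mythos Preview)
Your approach introduces an unnecessary detour, and the Eisenstein step contains a genuine gap.

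The paper's proof is considerably simpler. The key input you are missing is a different result of Zucker from the one you invoke: \cite[Theorem 6.9]{Zucker} states that for these $\QQ$-rank~$1$ quotients the natural map $H^i_2(S) \to H^i(S)$ is an \emph{isomorphism} for all $i \leq n-1$ (and is injective for $i=n$). Since $m < n$, the \emph{entire} ordinary cohomology $H^i(S)$ --- not merely the interior part $H^i_!$ --- coincides with $L^2$ cohomology in the range $i \leq m$. The same theorem applied to $S_W$ (of complex dimension $m$) gives that $H^i_2(S_W) \to H^i(S_W)$ is an isomorphism for $i \leq m-1$ and injective for $i=m$. The injectivity of \eqref{res} therefore reduces entirely to the $L^2$ statement, which follows from the spectral gap \eqref{trou} together with \cite[Chapitre~9]{MSMF}, exactly as in the anisotropic case. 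There is no residual Eisenstein contribution to handle.

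Your Borel--Franke decomposition and the separate treatment of $H^i_{\mathrm{Eis}}$ are thus superfluous. More seriously, the Eisenstein step is not a proof: the assertion that one can choose the $W_j$ to ``meet the maximal isotropic subspaces appropriately (via strong approximation)'' and thereby force injectivity on Eisenstein classes is a hope, not a mechanism --- you give no reason why such a choice controls the restriction of Eisenstein series, nor why the factorisation through Eisenstein series on $\U(h_{|W_j})$ should be injective. The ``principal difficulty'' of coordinating the two families of $W_j$ that you flag at the end is purely an artifact of this detour; in the paper's approach it does not arise at all.
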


On peut reformuler le th\'eor\`eme \ref{lef} \`a l'aide des correspondances de Hecke. \`A tout \'el\'ement rationnel $g$ du groupe $\U (h)$
il correspond en effet une correspondance finie $\Gamma \cap g^{-1} \Gamma g \backslash X \rightrightarrows S$ o\`u la premi\`ere application est la projection
de rev\^etement et la seconde est induite par la translation par $g$. Notons $C_g^* : H^i (S, \QQ) \to H^i (S , \QQ)$ l'endomorphisme induit. \'Etant donn\'e 
un sous-espace $W \subset V$ comme dans le th\'eor\`eme, on montre en fait que 
\begin{quote}
{\it pour toute classe $\alpha \in H^i (S, \QQ)$ de degr\'e $i \leq m$, il existe 
$g$ tel que l'image de $C_g^* (\alpha)$ dans $H^i (S_W , \QQ)$ par l'application de restriction $H^i (S, \QQ) \to H^i (S_{W} , \QQ)$ soit non nulle.} 
\end{quote}
\begin{proof} 
Il suffit de d\'emontrer l'assertion pour des classes de cohomologie complexes. Maintenant si l'on remplace les groupes de cohomologie $H^i (S)$ et $H^i (S_W)$
par les groupes de cohomologie $L^2$ 
$$H^i_{2} (S) = H^i ( \mathfrak{g} , K ; L^2_{\rm disc} (\Gamma \backslash \U (n,1))) \mbox{ et } H^i_{2} (S_W ) = H^i ( \mathfrak{g} , K ; L^2_{\rm disc} (\Gamma_W \backslash \U (m,1)))$$
le r\'esultat est cons\'equence de \cite[Chapitre 9]{MSMF}, se r\'ef\'erer en particulier \`a la d\'emonstration du th\'eor\`eme 9.2, et du trou
spectral \eqref{trou}. 

On conclut en remarquant qu'un th\'eor\`eme de Zucker \cite[Theorem 6.9]{Zucker} implique que l'application naturelle
$H_2^i (S ) \to H^i (S)$ est un isomorphisme si $i \leq n-1$ (en particulier si $i\leq m$) et qu'elle est injective si $i=n$. On applique ce dernier cas \`a
$S_W$ plut\^ot qu'\`a $S$ lorsque $i=m$.  
\end{proof}
Cela implique le th\'eor\`eme \ref{lef}. De la m\^eme mani\`ere on peut d\'eduire de la d\'emonstration de \cite[Th\'eor\`eme 9.4]{MSMF} et du trou spectral 
\eqref{trou} le th\'eor\`eme suivant relatif aux cup-produits dans la cohomologie de $S$.

\begin{thm}
Soient $\alpha$ et $\beta$ deux classes de cohomologie dans $H^{\bullet} (S , \QQ)$ de degr\'es respectifs $k$ et $\ell$ avec $k+\ell \leq n$. Il existe 
alors un \'el\'ement rationnel $g$ du groupe $\U (h)$ tel que 
$$C_g^* (\alpha ) \wedge \beta \neq 0 \mbox{ dans } H^{k +\ell} (S , \QQ).$$
\end{thm}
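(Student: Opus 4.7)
Plan de démonstration. Je procéderais comme pour le théorème \ref{lef}, en adaptant la démonstration du théorème 9.4 de \cite{MSMF} faite dans le cas cocompact. Puisque $H^{\bullet}(S,\QQ)$ s'injecte dans $H^{\bullet}(S,\CC)$, il suffit de traiter les coefficients complexes. Les degrés $k$, $\ell$ et $k+\ell$ étant tous au plus égaux à $n$, le théorème de Zucker \cite[Theorem 6.9]{Zucker} identifie $H^i(S,\CC)$ à la cohomologie $L^2$ discrète $H^i_2(S) = H^i(\mathfrak{g},K;L^2_{\rm disc}(\Gamma\backslash\U(n,1)))$ pour $i\leq n-1$, et fournit une injection pour $i=n$; comme cet isomorphisme est induit par l'inclusion des formes harmoniques $L^2$, il est compatible au cup-produit (en tout cas dans la zone de degrés considérée). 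On est donc ramené à établir la non-annulation voulue au niveau de la cohomologie $L^2$.

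Par la formule de Matsushima, $H^i_2(S)$ se décompose selon les représentations irréductibles $\pi$ à $(\mathfrak{g},K)$-cohomologie non nulle intervenant dans $L^2_{\rm disc}(\Gamma\backslash \U(n,1))$. On écrit alors $\alpha$, $\beta$ comme combinaisons de classes $\alpha_{\pi}$, $\beta_{\pi'}$ associées à de telles composantes isotypiques. Le cup-produit $C_g^*(\alpha)\wedge \beta$ se lit, au niveau spectral, via la composition (i) du produit $(\mathfrak{g},K)$-cohomologique $H^k(\mathfrak{g},K;\pi)\otimes H^\ell(\mathfrak{g},K;\pi')\to H^{k+\ell}(\mathfrak{g},K;\pi\otimes \pi')$ et (ii) de la projection automorphe de $\pi\otimes \pi'$ sur une représentation $\pi''$ contribuant à $H^{k+\ell}_2(S)$. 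Pour en tirer une non-annulation effective, j'invoquerais l'argument à la Burger-Sarnak développé au chapitre 9 de \cite{MSMF} : les correspondances $C_g^*$, $g$ parcourant les points rationnels de $\U(h)$, engendrent par densité l'algèbre de Hecke finie, ce qui permet de passer d'une non-annulation abstraite de l'accouplement archimédien à une non-annulation effective après ajustement de $g$.

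L'intervention du trou spectral \eqref{trou}, couplée à la condition $k+\ell\leq n$, sert à garantir que toutes les représentations archimédiennes en jeu sont cohomologiques, du type $A_{\mathfrak{q}}(\lambda)$, et que le spectre résiduel identifié dans la démonstration du théorème \ref{T1} (induit à partir de $\GL(1,E)\times \U(n-1)$) ne vient pas contaminer les accouplements cohomologiques aux degrés considérés. La principale difficulté est précisément l'adaptation au cas non compact : il faut d'une part contrôler la compatibilité simultanée des isomorphismes de Zucker aux trois degrés $k$, $\ell$ et $k+\ell$ avec le cup-produit, ce qui tient à la régularité $L^2$ des représentants harmoniques, et d'autre part justifier que la propriété de densité de l'algèbre de Hecke dans le sous-espace cohomologique — pierre angulaire de l'argument de \cite[Chap.~9]{MSMF} en cocompact — persiste dans le cadre non compact, ce qui est assuré par l'isolement spectral \eqref{trou}.
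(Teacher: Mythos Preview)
Votre plan coïncide avec celui de l'article, qui se contente d'indiquer que le théorème se déduit --- de la même manière que le théorème~\ref{lef} --- de la démonstration de \cite[Théorème~9.4]{MSMF} combinée au trou spectral~\eqref{trou}, le passage de la cohomologie $L^2$ à la cohomologie ordinaire s'effectuant via le théorème de Zucker. Vous détaillez correctement ces ingrédients et identifiez les points techniques (compatibilité au cup-produit, adaptation au cas non compact) que l'article laisse implicites.
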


\section{Résultats spectraux : le cas quaternionien} \label{S3}

\subsection{} Dans ce chapitre on considère le cas où le groupe réel $G$ est isomorphe à $\Sp(n,1)$ \cite[p.~354 de la 1ère édition]{Helgason}. Son sous-groupe compact maximal $K$ est isomorphe à $\Sp(n)\times \Sp(1)$, et l'espace symétrique $X$ est l'espace hyperbolique quaternionien, de dimension réelle $4n$. Nous renvoyons à Pedon \cite{Pedon} 
pour une description plus précise, ainsi que pour la description du spectre {\it tempéré} des formes différentielles sur l'espace quotient; retenons simplement  que Pedon calcule explicitement la borne inférieure $\alpha_k$ du spectre continu du laplacien $\Delta_k$ dans l'espace des $k$-formes $L^2$ sur $X$. Dans tous les cas $\alpha_k$ est un entier strictement positif; il est égal à $(2n+1)^2$
si $k=0$ et égal à $1$ si $k=2n$ (degré médian).

Nous considèrons le quotient de $X$ par un sous-groupe de de congruence $\Gamma$, dans notre acception habituelle. Ici, $\Gamma$ est défini de la façon suivante. Soit $F$ un corps totalement réel, et $\G$ un groupe semi-simple sur $F$ de type (absolu) $C_{n+1}$. On suppose $\G$ défini par une algèbre de quaternions $B$ sur $F$, ramifiée aux places réelles, ainsi que par une forme quaternionienne-hermitienne sur un espace de dimension $(n+1)$ sur $B$, relative à l'involution principale \cite[p. 56]{TitsBoulder}.  On suppose l'indice de Witt de la forme égal à $1$ en une place réelle $v_0$ et à $0$ ailleurs. Ainsi $\G (F\otimes \RR) \simeq \mathrm{Sp}(n,1)\times \mathrm{Sp}(n+1)^{d-1}$ où $d=[F:\QQ]$. On choisit  un groupe de congruence, au sens usuel, $\Gamma_1 \subset G(F)$. Alors $\Gamma_1\backslash \G (F\otimes \RR) =\Gamma \backslash \mathrm{Sp}(n,1)$ où $\Gamma=\Gamma_1 \cap \prod_{v\not=v_0}G(F_v)$. Noter que de tels groupes existent d'après \cite{BorelHarder}. On supposera que $\Gamma$ opère librement, de sorte que $Y=\Gamma\backslash X$ est une variété. (Ceci est inutile si on accepte de travailler dans la catégorie des \og orbifolds\fg). On écrira simplement $G$ pour $\G (F_{v_0})$.

On munit $Y$ de la métrique riemanienne invariante dont les courbures sectionnelles sont comprises entre $-4$ et $-1$ (cf. \cite[p. 234]{Pedon}). Ceci définit le laplacien $\Delta_k$ dans les $k$-formes, $0\le k\le 4n$.

Rappelons les données combinatoires. Tout d'abord, $\G$ est une forme (nécessairement intérieure) du groupe déployé $\mathrm{Sp}(n+1)$. Son groupe dual est $\SO (2n+3,\CC )\times \mathrm{Gal}(\overline{F}/F)$, et de même ${}^LG=\SO(2n+3,\CC) \times \mathrm{Gal}(\CC/\RR)$. En particulier l'entier $N$ tel que $\widehat{G}$ soit naturellement un sous-groupe de $\GL(N,\CC)$ est égal à $2n+3$. Soit $\mathfrak{g}_0=\mathrm{Lie} (G)$ et $\mathfrak{g} =\mathfrak{g}_0\otimes \CC$. On peut identifier naturellement une sous-algèbre de Cartan $\mathfrak{h}$ de $\mathfrak{g}$ à $\CC^{n+1}$, sur laquelle le groupe de Weyl absolu $\Omega\cong \mathfrak{S}_{n+1}\ltimes \{\pm 1\}^{n+1}$ opère de la façon naturelle.

On suppose la forme orthogonale sur $\CC^{2n+3}$, définissant $\widehat{G}$, donnée par la matrice
$$
\widehat{J}= \left(
\begin{array}{ccc}
&&1\\
&\adots&\\
1&&
\end{array}\right)\,.
$$

Alors
\begin{equation} \label{X1}
\widehat{\mathfrak{h}}= \{\mathrm{diag}(X_1,\ldots, X_{n+1},0,-X_{n+1},\ldots,-X_1) \}
\end{equation}
en est une sous-algèbre de Cartan, et un caractère infinitésimal de $G$ s'identifie, par l'isomorphisme d'Harish-Chandra, à un élément $P$ de $\widehat{\mathfrak{h}}/\Omega$, $\Omega$ opérant comme auparavant sur $(X_1,\ldots , X_{n+1})$. On renvoie à \cite[\S~6.3, 6.4]{Asterisque} pour ces constructions (décrites là pour $\widehat{G}$ symplectique ou orthogonal pair).

Soit $\Omega^k(Y)$ l'espace des $k$-formes $C^\infty$, et $\Delta_k$ le laplacien. Sur une forme $\alpha\in\Omega^k$ provenant d'une représentation irréductible $\pi$ \cite[Ch.~1]{Asterisque}, $-\Delta_k$ opère par l'opérateur de Casimir $C\in \mathcal{Z}(G) \cong S(\mathfrak{h})^\Omega$. On a
$$
\pi(C) = h(C)(P)
$$
où $h$ est l'isomorphisme d'Harish-Chandra, et $P\in \widehat{\mathfrak{h}}/\Omega$. (Rappelons que nous considérons le laplacien \emph{positif}.) 
Un calcul simple, que nous ne reproduirons pas, donne alors le résultat suivant. Notons $P\in \widehat{\mathfrak{h}}$ le paramètre complet, de dimension $(2n+3)$, du caractère infinitésimal, donné par \eqref{X1}; soit $\langle P,P \rangle =\sum_{i=1}^{2n+1}P_i^2$. Alors

\begin{lem}\label{LX1}
On a
$$
\pi(C) = \langle P,P \rangle - \langle \rho,\rho \rangle
$$
donc $\Delta_k$ opère sur les formes correspondantes par
$$
-\langle P,P \rangle + \langle \rho,\rho \rangle.
$$
\end{lem}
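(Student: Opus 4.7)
\emph{Esquisse de la preuve du lemme \ref{LX1}.} Il s'agit de la formule classique donnant l'action du Casimir sur une représentation irréductible de caractère infinitésimal donné ; le seul point délicat est la compatibilité des normalisations, la forme invariante $\langle\cdot,\cdot\rangle$ et le paramètre $P$ ayant été fixés par des conventions spécifiques. La première étape consiste à rappeler l'identité dans $U(\mathfrak{g})$
$$
C = \sum_i H_i H^i + 2 H_\rho + 2 \sum_{\alpha>0} X_{-\alpha} X_\alpha,
$$
où $\{H_i\}$, $\{H^i\}$ sont des bases duales de $\mathfrak{h}$ relativement à la forme invariante choisie et $\{X_{\pm \alpha}\}$ une base de type Chevalley normalisée par $[X_\alpha, X_{-\alpha}] = H_\alpha$. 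Appliquée à un vecteur de plus haut poids $\mu$, cette expression donne $\langle \mu, \mu \rangle + 2\langle \mu, \rho \rangle = \langle \mu + \rho, \mu + \rho \rangle - \langle \rho, \rho \rangle$. L'isomorphisme d'Harish-Chandra $h : \mathcal{Z}(\mathfrak{g}) \to S(\mathfrak{h})^\Omega$, avec le $\rho$-décalage usuel, envoie donc $C$ sur le polynôme $\lambda \mapsto \langle \lambda, \lambda \rangle - \langle \rho, \rho \rangle$ sur $\mathfrak{h}^*$, et pour $\pi$ de caractère infinitésimal paramétré par $P$ on a $\pi(C) = \langle P, P \rangle - \langle \rho, \rho \rangle$.

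La seconde étape consiste à vérifier la compatibilité des échelles. Plus précisément, identifiant $\mathfrak{h}$ via \eqref{X1} avec $\CC^{n+1}$ (les $n+1$ premières entrées diagonales), on calcule la restriction à cette sous-algèbre de la forme $\langle \cdot, \cdot \rangle$ sur $\mathfrak{g}_0$, normalisée de sorte que les courbures sectionnelles de $X$ soient dans $[-4,-1]$, cf.\ \cite[p.~234]{Pedon}, et on vérifie que la forme duale sur $\widehat{\mathfrak{h}}$ est bien celle s'écrivant comme somme des carrés des coordonnées données par \eqref{X1}. Ce calcul est du même type que celui effectué au \S~\ref{S1} dans le cas unitaire, donnant $B(X,Y) = 2(n+1) \mathrm{Tr}(XY)$, et se réduit à la vérification d'une seule constante de normalisation.

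En combinant ces deux étapes on obtient $\pi(C) = \langle P, P \rangle - \langle \rho, \rho \rangle$ ; comme le laplacien positif $\Delta_k$ opère sur les formes issues de $\pi$ par $-\pi(C)$, convention rappelée dans l'énoncé, on en déduit l'égalité cherchée $\Delta_k = -\langle P, P \rangle + \langle \rho, \rho \rangle$. L'unique obstacle est vraiment la compatibilité des normalisations à la seconde étape : il faut suivre à la fois le facteur entre la forme de Killing et la forme produisant les bornes de courbure prescrites, et le facteur introduit par la duplication des coordonnées dans \eqref{X1} (coordonnées $X_i$ et $-X_i$). Aucun de ces points n'est conceptuellement difficile, ce qui explique sans doute que les auteurs aient choisi d'omettre ces détails.
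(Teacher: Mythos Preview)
Your sketch is correct. The paper gives no proof at all: immediately before the lemma it writes \og Un calcul simple, que nous ne reproduirons pas, donne alors le résultat suivant\fg, so there is nothing in the paper to compare against. Your argument via the standard Casimir identity and the Harish-Chandra isomorphism is exactly the classical computation the authors are alluding to, and your emphasis on the normalization issues (the curvature-normalized metric versus the Killing form, and the doubling of coordinates in the parameter \eqref{X1}) correctly identifies where the actual work lies.
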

Ici $\rho\in \widehat{\mathfrak{h}}=\mathfrak{h}^*$ est la demi-somme des racines, égale à $(n+1,n,\ldots 1,0,\break -1,\ldots -(n+1))$ de sorte que $\langle \rho,\rho\rangle =2\sum_{x=1}^{n+1}x^2$.

Rappelons que le dual unitaire de $G$ se compose des séries discrètes, des limites de séries discrètes, de représentations irréductibles de la série principale unitaire et de quotients de Langlands de la série principale non unitaire. Pour une description explicite voir Baldoni-Silva \cite[Théorème 7.1]{BaldoniSilva}. Les seules représentations de la série discrète qui interviennent ici sont celles dont le caractère infinitésimal est égal à celui, $\rho$, de la représentation triviale (voir le thèse de Pedon \cite{PedonThese} ou \cite[p.~39]{Asterisque}). Elles sont au nombre de 
$|\Omega/\Omega_K|=n+1$, et contribuent la valeur propre $0$ au spectre, 
uniquement en le degré médian $=2n$.

On notera que ces représentations ne sont pas isolées dans le spectre unitaire, comme il résulte facilement de l'article de Vogan \cite{VoganParkCity}; voir aussi \cite{JIMJ}.

Les limites de séries discrètes, et les représentations de la série discrète unitaire, contribuent (quand elles contiennent les $K$-types associés aux formes différentielles) au spectre tempéré. Les valeurs propres correspondantes sont décrites par Pedon \cite{Pedon}; d'après des résultats connus, chacune peut être approchée (en degré $k$) par une valeur propre apparaissant dans $Y$ pour un $\Gamma$ convenable. Elles feront donc partie du spectre automorphe. Cf. \cite[Théorème~2.4.6]{Asterisque}.

Décrivons les séries principales. Le groupe $G$ possède un unique parabolique non trivial, le parabolique minimal $P=\GM N$. Le groupe (algébrique) $\GM$ s'identifie naturellement (vu la définition de $G$ comme groupe unitaire quaternionien)~à
$$
B^\times \times \mathrm{Sp}(n-1)
$$
où $B$ est l'algèbre de Hamilton. Son sous-groupe compact maximal est donc $M=\SU (2) \times \mathrm{Sp}(n-1)\cong \mathrm{Sp}(1) \times \mathrm{Sp}(n-1)$. Si on note $\det : B^\times \to \RR_+^\times$ la norme réduite, une représentation de la série principale s'écrit~donc
\begin{equation}\label{X2}
\mathrm{ind}_P^G(\tau_1 |\det|^s \otimes \tau_2) := I(\tau,s)
\end{equation}
où $s \in \CC$. On écrira par la suite $M=\mathrm{Sp}(1)\times \mathrm{Sp}(n-1)=M_1\times M_2$ ; $\tau_i$ est une représentation irreducible de $M_i$.

Le sous-groupe parabolique $\widehat{P}$ de $\widehat{G}$ associé à $P$ a une composante de Levi $\widehat{M}$ qui se décrit de la façon suivante.
\begin{equation}
\widehat{M}=\left(
\begin{array}{ccc}
g_1 &&\\
&g_2 &\\
&&w {}^t g_1^{-1}w
\end{array}\right) \cong \GL(2,\CC) \times \SO(2n-1,\CC)
\end{equation}
où $g_1\in \GL(2,\CC)$, $g_2\in \SO(2n-1,\CC)$ (pour la forme de matrice antidiagonale) et $w=\left(\begin{smallmatrix}
&1\\ 1&\\
\end{smallmatrix}\right)$.

Décrivons les paramètres de Langlands des représentations \eqref{X2}. Ils sont donnés~par
$$
\varphi : W_\RR \longrightarrow \widehat{M}
$$
qui se décompose en $(\varphi_1,\varphi_2)$. Le paramètre $\varphi_2$ est associé à une représentation du groupe \emph{compact} $\mathrm{Sp}(n-1)$. D'après Langlands \cite{Langlands}, il se décrit de la façon suivante. On a pour $z\in \CC^\times \cong W_\CC \subset W_\RR$
$$
\varphi_2(z) = ((z/\overline{z})^{p_2}, 
(z/\overline{z})^{p_3},\ldots,(z/\overline{z})^{p_n},1,(z/\overline{z})^{-p_n},\ldots,(z/\overline{z})^{-p_2})
$$
où $p_2>p_3>\ldots>p_n>0$ sont des entiers. Le paramètre $P_2'=(p_2,\ldots,p_{n-1})$ définit le plus haut poids $m^2$ d'une représentation $\tau_2$ de $M_2$ de la façon suivante. Le réseau des poids de $\mathrm{Sp}(n-1)$ s'identifie naturellement à $\ZZ^{n-1}$. (Pour une description explicite, voir Pedon \cite[p.~233]{Pedon}.) La demi-somme des racines $\rho_2$ est $(n-1,\ldots 1)\in \ZZ^{n-1}$. Le plus haut poids de la représentation~$\tau_2$~est
\begin{equation*}
\begin{split}
m^2 &=(p_2 -n+1,\ p_3-n+2,\ldots p_n -1)\\
&=(m_2,\ldots m_n)
\end{split}
\end{equation*}
de sorte que $m_2\geq m_3\geq\ldots\geq 0$.

Il reste à décrire $\varphi_2(j)$ où $j\in W_\RR$ et $j^2=-1\in W_\CC$. La représentation étendue à $W_\RR$ préserve nécessairement les sous-espaces associés à $(z/\overline{z})^{p_i}$, $(z/\overline{z})^{-p_i}$ ; elle y est isomorphe à $\mathrm{ind} _{W_\CC}^{W_\RR}((z/\overline{z})^{p_i})$. D'après la formule pour le déterminant d'une induite, celui-ci est égal au transfert $\mathrm{transf}_{W_\CC}^{W_\RR}((z/\overline{z})^{p_i})$ multiplié par le caractère d'ordre 2 $\varepsilon_{\CC/\RR}$ de $\RR^\times$. Le transfert s'identifie au caractère $(z/\overline{z})^{p_i}|_{\RR^\times}=1$ de $\RR^\times$. Le déterminant est donc égal à $\varepsilon_{\CC/\RR}=\mathrm{sgn}$. Puisque l'image de $\varphi_2$ est contenue dans $\SO(2n+1)$, ce paramètre est isomorphe comme représentation de $W_\RR$~à
$$
\mathrm{ind}((z/\overline{z})^{p_2}) \oplus \ldots \oplus \mathrm{ind}((z/\overline{z})^{p_n}) \oplus \mathrm{sgn}^{n-1}.
$$

Décrivons $\varphi_1$. Il correspond à une représentation de $B^\times$, donc à une représentation de la série discrète de $\GL(2,\RR)$, non unitaire en général. On a donc
$$
\varphi_1 = \mathrm{ind}_{W_\CC}^{W_\RR}(z^{p_1}(\overline{z})^{q_1})
$$
où $p_1$, $q_1\in \CC$, $p_1-q_1\in \ZZ -\{0\}$. En particulier $|p_1-q_1|>0$ ; $|p_1-q_1|-1$ définit le plus haut poids $m_1$ d'une représentation de $M_1=\mathrm{Sp}(1)$ : c'est la représentation $\tau_1$. Le paramètre $s$ de \eqref{X2}~est
\begin{equation} \label{X3}
s=\frac{p_1+q_1}{2}.
\end{equation}

On écrira souvent $z^p\overline{z}^q$ pour le caractère de $\CC^\times$, qui définit donc la représentation induisante de~$B^\times$.

Le paramètre infinitésimal associé à $I(\tau,s)$ est, d'après un calcul standard \cite[Lemme~6.3.1]{Asterisque} :
$$
P=(p_1,q_1,P_2,-q_1,-p_1)
$$
$$
P_2=(p_2,\ldots p_{n},0,-p_{n},\ldots -p_2),$$
donc, d'après le lemme \ref{LX1}, la valeur propre
du laplacien associée, quand la représentation $\pi=I(\tau,s)$ intervient dans les $k$-formes, est
\begin{equation} \label{X5}
\lambda_\pi = -\langle P,P \rangle + \langle \rho,\rho \rangle = -2 (p_1^2 + q_1^2) -\langle P_2,P_2 \rangle + \langle \rho,\rho \rangle.
\end{equation}

Vérifions la compatibilité de notre expression avec celle de Pedon \cite[p.~237]{Pedon}. On vérifie aisément que le paramètre induisant \og$\lambda$\fg $\in\CC$ de Pedon pour la série principale \cite[\S~3.2]{Pedon} est égal à $2is$ dans notre notation. Pour le laplacien positif, il obtient alors la valeur propre
$$
\lambda_\pi = -4 s^2 + \rho_N^2 - c(\tau)
$$
où $\rho_N$ (dans le dual de l'algèbre de Lie du centre connexe $A$ de $B^\times$) est, dans sa paramétrisation, égal à $2n+1$, et $c(\tau)$ est la valeur propre de l'opérateur de  Casimir dans $\tau$. On a $c(\tau)=c(\tau_1)+c(\tau_2)$,
$$
\begin{array}{ll}
c(\tau_2) &=\langle P_2,P_2 \rangle - \langle \rho_2,\rho_2 \rangle \\
c(\tau_1) &=m_1^2 -1
\end{array}
$$
d'où l'expression de Pedon :
\begin{equation} \label{X6}
\lambda_\pi = -4s^2 +(2n+1)^2 -\langle P_2,P_2 \rangle - m_1^2 +\langle \rho_2,\rho_2 \rangle +1.
\end{equation}

Or $p_1=\frac{m_1}{2}+ s$, $q_1=-\frac{m_1}{2}+s$, $p_1^2+q_1^2=2s^2+\frac{m_1^2}{2}$, et l'égalité de \eqref{X5} et \eqref{X6} résulte alors de
$$
\langle \rho,\rho \rangle = \langle \rho_2, \rho_2\rangle +(2n+1)^2+1
$$
{\it i.e.} de
$$
2(n^2+(n+1)^2) = (2n+1)^2+1.
$$

\subsection{} En imitant le chapitre 6 de \cite{Asterisque}, nous décrivons maintenant, à la place réelle, les paramètres d'Arthur des représentations apparaissant dans  $L^2(\Gamma\backslash G)$. L'argument de réduction déjà utilisé dans \cite{Inventiones} et qui sera résumé dans le \S~\ref{SX5} nous ramène à considérer la forme quasi-déployée $G^*$ de $G$ : c'est le groupe orthogonal déployé d'un espace de dimension $2n+3$. On considère un paramètre d'Arthur $\psi$ pour $ G^*$ (sur $\RR$). Rappelons qu'un tel paramètre est de la forme
$$
\psi : W_\RR \times \SL(2,\CC) \longrightarrow \widehat{G}.
$$
En particulier on a aussi
$$
\psi : W_\RR \times \SL(2,\CC) \longrightarrow \GL(N,\CC).
$$

On écrira simplement $S$ pour $\SL(2,\CC)$. Un tel paramètre s'écrit sous la forme
$$
\psi = \bigoplus_{i=1}^r \rho_i \otimes r_i
$$
où $\rho_i$ est une représentation irréductible de $W_\RR$ et $r_i$ une représentation irréductible de~$S$.

Considérons pour l'instant la restriction de $\psi$ à $W_\CC \times S$, $W_\CC = \CC^\times$. On peut écrire, changeant de notation :
\begin{equation}\label{X7}
\psi_\CC = \bigoplus_{i=1}^r \chi_i \otimes r_i
\end{equation}
où $\chi_i= z^{p_i} \overline{z}^{q_i}$ est un caractère de $\CC^\times$. Un tel paramètre sera obtenu par restriction à la
place réelle d'un paramètre relatif à $F$ (Arthur \cite{Arthur}). En particulier, 
dans \eqref{X7}, les caractères $\chi_i$ correspondent aux caractères induisants du facteur en $v_0$ d'une représentation cuspidale de $\GL(a,\Ab_F)$, dont on a pris le changement de base à $\CC/\RR$. Ils vérifient donc la majoration de Luo-Rudnick-Sarnak étendue dans \cite[Chapitre~7]{Asterisque} :
$$
|\chi_i(z)|^{\pm1} \leq (z\overline{z})^{\eta_N}.
$$

Rappelons que $\eta_N =\frac{1}{2} -\frac{1}{N^2+1}$ avec ici $N=2n+3$, le fait essentiel étant que $\eta_N<\frac{1}{2}$. Dans ce qui suit, les résultats conditionnels (supposant la conjecture de Ramanujan généralisée) correspondent donc à~$\eta_N=0$.

Si $\chi(z) =z^{p}(\overline{z})^q$, apparaît dans \eqref{X7}, on sait donc que
\begin{equation} \label{X8}
|\mathrm{Re}\big(\frac{p+q}{2}\big)| \leq \frac{1}{2}-\frac{1}{N^2+1} = \eta_N. 
\end{equation}

Il sera commode de noter $R$ le domaine de $\CC$ défini par cette inégalité :
\begin{equation} \label{X9}
R=\{s\in \CC \; : \;  |\mathrm{Re} (s)| \leq \eta_N \}.
\end{equation}

Revenons à \eqref{X7} et supposons que $\psi_\CC$ est restreint d'un paramètre $\psi$ pour $\widehat{G}$. Le paramètre $\psi$, ainsi que les $r_i$, étant autoduaux, $\chi_i^{-1}\otimes r_i$ apparaît nécessairement avec $\chi_i\otimes r_i$. Si $\chi_i =\chi_i^{-1}$, $\chi_i$ est trivial. Donc $\psi_\CC$ est la somme des termes
\begin{equation}\label{X10}
\sum_{\chi_i\not= 1}
 (\chi_i \otimes r_i) \oplus (\chi_i^{-1}\otimes r_i)
 \end{equation}
 et
\begin{equation} \label{X11}
\sum_i (1\otimes r_i).
\end{equation}
Par ailleurs, le paramètre est orthogonal, pour une dualité non dégénérée. Si $r_i$ est symplectique (donc de degré $n_i$ pair) et apparaît dans \eqref{X11}, sa multiplicité est donc paire.
 
Puisque le paramètre s'étend à $W_\RR$, un caractère $\chi_i(z)$ doit apparaître avec $\chi_i(\overline{z})=\chi_i^c$. Si $\chi_i=z^{p_i}(\overline{z})^{q_i}$ apparaît dans 
\eqref{X10}, et si $p_i\not= q_i$, il doit aussi apparaître 
$\chi_i^c= z^{q_i}\overline{z}^{q_i}$.
Ce caractère est égal à $\chi_i^{-1}$ si $\chi_i^c=\chi_i^{-1}$ (on dira alors 
que $\chi_i$ est de type unitaire), c'est-à-dire si $p_i=-q_i$ ; noter qu'alors $p_i,q_i\in\frac{1}{2}\ZZ$. On voit alors que \eqref{X10} se subdivise en les cas suivants (on dira que $\chi$ est réel si $\chi=\chi^c$) :
\begin{align}
 &\sum_{\chi_i\ \textrm{r\'eel}} (\chi_i\otimes r_i) \oplus (\chi_i^{-1}\otimes r_i)\\
& \sum_{\chi_i\atop \mathrm{de\ type\ unitaire}} (\chi_i \otimes r_i\oplus \chi_i^{-1}\otimes r_i)\\
& \sum_{\mathrm{autres\ }\chi_i}(\chi_i\otimes r_i \oplus\chi_i^{-1} \otimes r_i \oplus \chi_i^c \otimes r_i \oplus \chi_i^{-c}\otimes r_i) \label{X14}
\end{align}
(Noter qu'un caractère réel de type unitaire est trivial !)

\`A toute composante $\chi\otimes r$ sont associées $n$ composantes du caractère infinitésimal $P$ de la représentation, données si $\chi=z^p \overline{z}^q$ par
$$
\left(p+\frac{n-1}{2},p+\frac{n-3}{2}, \ldots, p+ \frac{1-n}{2}\right)
$$
où $n$ est le degré de $r$. Notons pour simplifier $p[n]$ cette famille de $n$ r\'{e}els. Rappelons aussi que $P=(p_1,p_2,\ldots, -p_1)$ peut être paramétré par ses $(n+1)$ premières composantes (modulo $\Omega$), la composante médiane étant nulle. On note $P'$ cet élément de $\CC^{n+1}$. On voit alors que la contribution de \eqref{X10} à $P'$ est la somme
\begin{align} \label{X15}
 &\sum_{\chi_i\ \textrm{r\'eel}\atop \mathrm{ou\ de\ type\ unitaire}} p_i[n_i] &(a)\\
& \oplus \sum_{\mathrm{autres\ }\chi_i}(p_i[n_i],q_i[n_i]) &(b)\nonumber
\end{align}
Notons simplement $[n]$ la séquence $0[n]=\left(\frac{n-1}{2},\ldots,\frac{1-n}{2}\right)$. Considérons les termes \eqref{X11}. Si $n_i$ est pair, $1\otimes r_i$ contribue deux fois $[n_i]$ à $P$, et une fois à $P'$. Si $n_i$ est impair, en notant $[n]^+$ la séquence $\left(\frac{n-1}{2},\frac{n-3}{2},\ldots 0\right)$, on voit que $1\otimes r_i$ contribue $[n_i]^+$ à $P'$. Noter que le nombre de termes \eqref{X11} avec $n_i$ impair est nécessairement impair (donc non nul). Il y a donc un terme redondant en $0$ (correspondant au terme médian de $P$) que l'on retranche par la notation formelle $\ominus 0$. Au bout du compte $P'$ est la somme de \eqref{X15} et~de
\begin{align} \label{X16}
 &\sum_{n_j\ \mathrm{pair}} \ [n_j]  &(c)\\
& \oplus \sum_{n_j\ \mathrm{impair}}[n_j]^+ &(d)\nonumber\\
&\ominus 0\,.\nonumber
\end{align}

Nous devons maintenant déterminer quand ces représentations s'étendent à $W_\RR$. Rappelons que les sous--groupes de Levi de $\widehat{G}$ sont de la forme
$$
\widehat{M} = \GL(a_1,\CC) \times \GL(a_2,\CC) \times\ldots \times \SO(2m+1,\CC)
$$
où $m\ge 0$, $2n+3=2m+1+2\sum a_i$ et $\widehat{M}$ est plongé dans $\widehat{G}$ de façon analogue à~\eqref{X3}.

En particulier, tous les termes de \eqref{X10} et \eqref{X11} qui peuvent s'écrire (comme représentations de $W_\RR \times S$, après extension à $W_\RR$) sous la forme 
$\rho_1 \oplus \tilde{\rho}_1$ s'étendent en des param\`{e}tres $W_\RR \times S \to \widehat{G}$ en passant par un facteur $\GL(a_i)$, et on est ramené alors, pour le paramètre
restant, à un morphisme $W_\RR \times S \to \SO(2m+1,\CC)$, $m<n+1$. Dans \eqref{X14}, $(\chi_i \oplus \chi_i^c)$ s'étend à $W_\RR$ (pas d'obstruction à la descente pour $\GL(2))$ et \eqref{X14} s'écrit alors ainsi. De même évidemment pour les termes $(c)$ de \eqref{X16}, ainsi que pour les représentations contribuant à $(d)$ avec multiplicité paire.
Il reste donc les termes $(a)$ de \eqref{X15} ainsi que les termes $(d)$ de \eqref{X16}, avec multiplicité~1.

Puisque $n_j$ est impair, la représentation $r_j$ de $S$ est orthogonale. On peut même la multiplier par le caractère $\mathrm{sgn}$ de $W_\RR$, donnant alors une 
représentation $W_\RR \times S \stackrel{\rho}{\longrightarrow} \mathrm{O}(n_j)$ telle que $\det(\rho(j))=-1$. A conjugaison près, on peut alors plonger $\prod \SO(n_j)$ dans $\widehat{G}$, correspondant à une décomposition orthogonale de $\CC^N$. On a $(\prod \mathrm{O}(n_j) \times \mathrm{O}(m))^+ \subset \widehat{G}$, le $+$ 
correspondant au déterminant total égal~à~1.

Noter que $m$ est pair puisque le nombre de termes $(d)$ est impair. En tenant compte des cas déjà  décrits, on est donc ramené à étendre le paramètre à valeurs dans 
$\mathrm{O}(m)$, somme des termes $(a)$ de \eqref{X15}, à~$W_\RR$.

Si $\chi_i$ est réel, $\chi_i(z)=(z\overline{z})^p$ s'étend à $W_\RR$, intervient avec son dual, et on est de nouveau ramené au cas précédent. Supposons enfin $\chi$ de 
\emph{type unitaire} :
$$
\chi(z) =(z/ \overline{z})^p,\ p\in \frac{1}{2} \ZZ.
$$
On cherche à étendre la somme des $(\chi_i\oplus \chi_i^{-1})\otimes r_i$ en une représentation vers $\mathrm{O}(m)$. Des arguments évidents d'isotypie montrent 
que l'on peut se réduire à un unique $\chi$, puis à un unique $r$, puis d'après l'argument précédent à $(\chi \oplus \chi^{-1}) \otimes r$ apparaissant avec multiplicité~1.

On a $\chi^{-1}=\chi^c$ et $\chi\oplus \chi^{-1}$ s'étend de façon unique à $W_\RR$ en $\mathrm{ind}_{ W_\CC}
^{W_\RR}(\chi)$, celle-ci étant irréductible. Elle est symplectique si $p\in \frac{1}{2}+\ZZ$, orthogonale si $p\in \ZZ$. Mais la forme symétrique sur l'espace de 
$\mathrm{ind}(\chi)\otimes r$ est nécessairement, par irréductibilité des facteurs, le produit tensoriel d'une forme bilinéaire sur $\mathrm{ind}(\chi)$ et d'une sur $r$, chacune invariante. On en déduit~enfin :

\begin{lem} \label{LX2}
Les paramètres d'Arthur $W_\CC \times S \to \widehat{G}$ sont les sommes de paramètres de la forme \eqref{X11}-\eqref{X14} vérifiant la condition suivante : 
pour un paramètre $\chi\otimes r\oplus \chi^{-1} \otimes r$ où $\chi=(z/\overline{z})^p$ est de type unitaire, et de multiplicité impaire, 
$$
\begin{array}{llll}
\dim(r) &\mathit{est\ impaire} &\mathit{si} &p\in\ZZ\\
\dim(r) &\mathit{est\ paire} &\mathit{si} &p\in\frac{1}{2}+\ZZ.
\end{array}
$$
\end{lem}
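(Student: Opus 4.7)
Le plan est de déterminer exactement quand un paramètre $\psi_\CC : W_\CC \times S \to \widehat{G}$ s'étend en un paramètre d'Arthur $W_\RR \times S \to \widehat{G} = \SO(2n+3, \CC)$ ; l'analyse préalable des décompositions \eqref{X10}--\eqref{X11} a isolé le cœur du problème, et je procéderais par séparation des constituants irréductibles selon la nature de leur obstruction. D'abord, je remarquerais que tout sous-paramètre de la forme $\rho_\CC \oplus \tilde{\rho}_\CC$ s'étend automatiquement à $W_\RR$ via un facteur $\GL(a_i)$ d'un sous-groupe de Levi de $\widehat{G}$. Ceci élimine les termes de type \eqref{X14}, les contributions des caractères réels à \eqref{X15}, les termes $(c)$ de \eqref{X16}, ainsi que tout terme $1 \otimes r_j$ apparaissant avec multiplicité paire dans \eqref{X11}.

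Après cette réduction, il reste à construire une extension à valeurs dans un facteur résiduel $\mathrm{O}(m)$, avec $m$ pair (puisque le nombre de termes $(d)$ impairs de \eqref{X16} l'est), formée de deux types de contributions : (i) les termes $1 \otimes r_j$ avec $n_j$ impair et multiplicité un, et (ii) les termes de type unitaire $(\chi \oplus \chi^{-1}) \otimes r$ avec $\chi = (z/\overline{z})^p$ et multiplicité impaire. Pour (i), la représentation $r_j$ étant orthogonale, sa torsion par le caractère $\mathrm{sgn}$ de $W_\RR$ fournit une extension $W_\RR \times S \to \mathrm{O}(n_j)$ avec $\det \rho(j) = -1$, et ces facteurs s'assemblent sans autre obstruction dans $\widehat{G}$. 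Pour (ii), une décomposition isotypique me ramènerait à un unique facteur apparaissant avec multiplicité un, et $\chi \oplus \chi^{-1} = \chi \oplus \chi^c$ s'étend alors uniquement en la représentation induite irréductible bidimensionnelle $\mathrm{ind}_{W_\CC}^{W_\RR}(\chi)$.

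Le point clé est alors la détermination du type (orthogonal ou symplectique) de $\mathrm{ind}_{W_\CC}^{W_\RR}(\chi)$ : à l'aide de l'évaluation $\chi(-1) = (-1)^{2p}$, on vérifie que cette induite est orthogonale si $p \in \ZZ$ et symplectique si $p \in \frac{1}{2} + \ZZ$. Comme le produit tensoriel $\mathrm{ind}(\chi) \otimes r$ doit préserver une forme bilinéaire invariante symétrique (le but étant $\mathrm{O}(m)$), et que cette forme est nécessairement, par irréductibilité des deux facteurs, le produit tensoriel des formes invariantes sur chacun, les deux types doivent coïncider : tous deux orthogonaux ou tous deux symplectiques. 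Ceci livre exactement la condition énoncée sur la parité de $\dim r$. Le principal point délicat est le calcul de signe identifiant le type de $\mathrm{ind}(\chi)$ pour les caractères de type unitaire --- une erreur de signe y inverserait la condition de parité --- le reste de l'argument étant une vérification combinatoire.
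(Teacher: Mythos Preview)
Your proposal is correct and follows essentially the same line as the paper's argument: reduction via $\GL(a_i)$-factors of Levi subgroups to eliminate the self-dual pairs, handling of the odd-dimensional orthogonal pieces via the $\mathrm{sgn}$-twist, isotypic reduction to a single $(\chi\oplus\chi^{-1})\otimes r$ of multiplicity one, and finally the tensor-product-of-forms argument to force matching types. Your explicit justification of the type of $\mathrm{ind}_{W_\CC}^{W_\RR}(\chi)$ via $\chi(-1)=(-1)^{2p}$ is the standard computation and supplies a detail the paper simply asserts.
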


Noter qu'il y a en fait des restrictions plus précises sur les paramètres réels : dans le calcul précédent, selon les choix des extensions des facteurs $1\otimes r_j$ à $W_\RR$, la représentation finale serait à image dans $\mathrm{O}^-(m,\CC)$. Ceci n'a pas d'importance pour nous. 

\subsection{} Nous pouvons maintenant déterminer, dans les différents cas possibles, les bornes sur les valeurs propres de $\Delta_k$ résultant des calculs précédents. Rappelons qu'on~a
$$
P' =(p,q,P_2')
$$
où on peut supposer que (modulo $\tilde{W})$
$$
P_2'=(p_2>p_3>\cdots>p_n>0).
$$
Il y a donc $(n-1)$ coordonnées entières distinctes et non nulles dans les termes \eqref{X15} et \eqref{X16}, on en déduit que

\begin{lem} \label{LX3}
Il apparaît au plus un terme de type $(c)$; dans ce cas on a $n_j=2$ et le terme de type $(c)$ est de multiplicité~$1$.
\end{lem}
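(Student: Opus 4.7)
Le plan consiste à exploiter la contrainte d'intégralité sur $P'$ signalée juste avant le lemme. Rappelons qu'on peut écrire $P' = (p_1, q_1, P_2')$ où $P_2' = (p_2, \ldots, p_n)$ est constitué de $n-1$ entiers strictement positifs distincts. Ainsi, parmi les $n+1$ coordonnées de $P'$, au plus deux — à savoir $p_1$ et $q_1$ — peuvent ne pas être des entiers strictement positifs distincts; en particulier au plus deux d'entre elles peuvent être des demi-entiers (éléments de $\frac{1}{2}+\ZZ$).

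On observerait alors qu'un terme de type $(c)$, c'est-à-dire $1 \otimes r_j$ avec $n_j$ pair, contribue à $P'$ la séquence $[n_j]$, dont les coordonnées $\pm\frac{1}{2}, \pm\frac{3}{2}, \ldots, \pm\frac{n_j-1}{2}$ sont toutes des demi-entiers. Ces demi-entiers ne pouvant apparaître dans $P_2'$, ils doivent se loger collectivement dans les deux places libres $(p_1, q_1)$, ce qui impose un budget strict d'au plus deux coordonnées demi-entières provenant de l'ensemble des contributions de type $(c)$.

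Il resterait à vérifier chaque configuration. Un unique terme $(c)$ avec $n_j = 2$ et multiplicité $1$ contribue exactement les deux demi-entiers $\frac{1}{2}, -\frac{1}{2}$, saturant le budget ; ils correspondent à $(p_1, q_1)$ avec $p_1 - q_1 = 1 \in \ZZ\setminus\{0\}$, conformément à la structure de série discrète de $\GL(2,\RR)$ pour $\varphi_1$. En revanche, (i) si $n_j \geq 4$, un seul terme $(c)$ contribue déjà $n_j \geq 4$ demi-entiers ; (ii) deux termes $(c)$ distincts contribuent au moins $2+2=4$ demi-entiers ; (iii) un terme $(c)$ de multiplicité $\geq 2$ contribue au moins $2\cdot 2 = 4$ demi-entiers. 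Chaque cas excède le budget de $2$ et se trouve ainsi exclu. La seule subtilité, purement comptable, est d'articuler correctement la convention sur les multiplicités avec la contrainte de parité rappelée après \eqref{X11} pour un $r_i$ symplectique : la \og multiplicité $1$\fg \ de l'énoncé s'entend ici comme l'occurrence unique de $[n_j]$ dans $P'$, et aucune difficulté sérieuse n'est à prévoir.
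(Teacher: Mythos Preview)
Your proposal is correct and follows exactly the paper's approach: the paper's proof is the single clause just before the lemma (``Il y a donc $(n-1)$ coordonnées entières distinctes et non nulles dans les termes \eqref{X15} et \eqref{X16}, on en déduit que\ldots''), and you have simply unpacked this into the explicit half-integer count. Your remark on how to read ``multiplicité $1$'' (one occurrence of $[n_j]$ in $P'$, hence multiplicity $2$ for $1\otimes r_j$ in $\psi_\CC$ by the parity constraint after \eqref{X11}) is the right reading.
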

Nous considérons maintenant les différents cas possibles.

\subsubsection{Cas A}
On suppose que le terme désigné par le lemme précédent intervient. Les autres coordonnées de $P'$ sont entières et distinctes. Dans \eqref{X16}, il intervient donc au plus un terme $(d)$ avec $n_j>1$ (sinon il y a deux coordonnées égales à $1$), et au plus un terme avec $n_j=1$ : c'est alors l'unique terme $(d)$. Ainsi \eqref{X16}$(d)$ se réduit~à
\begin{equation} \label{X17}
[m]^+ \quad (m\ge 1)
\end{equation}
ce terme étant vide si $n_j=1$. D'après \eqref{X15}, on a~donc
$$
P' =\left(\frac{1}{2}, -\frac{1}{2},\frac{m-1}{2},\frac{m-3}{2},\ldots,1,\ p_i[n_i],\ p_j[n_j],q_j [n_j]\right)
$$
où les $p_i \ (\not=0)$ sont associés aux caractères réels ou de type unitaire. Si $p_i$ provient d'un caractère réel, $p_i\in R-\{0\}$ (cf. \eqref{X9}) 
et alors $p_i[n_i]$ ne contient pas d'entiers : c'est impossible. Si $p_i$ provient d'un caractère de type unitaire $\chi_i$, celui-ci 
intervient avec multiplicité 1 et le lemme~\ref{LX2} s'applique.

Les éléments de $p_i[n_i]$ sont donc des entiers. Considérons un terme $(p_j[n_j],\break q_i[n_j])$. On a $q_j=m_j+p_j$ $(m_i\in \ZZ)$ et $(p_j[n_j], q_j[n_j]+m_j)$ 
est formé d'entiers distincts. C'est possible ; chacun de ces termes ({\it i.e.}, chaque $\chi_j$) apparaît du reste avec multiplicité 1. Dans ce cas on doit avoir
$$
p_j,q_j \equiv \frac{n_j-1}{2} \ [1]
$$
par intégralité. Mais $p_i+q_j\in 2R$, et ces deux conditions impliquent $p_i+q_j\in ]-1,1[$ et $p_i+q_j$ entier. Donc $p_i+q_j=0$ et le caractère~est
$$
(z/\overline{z})^{p_j},\quad p_j \equiv \frac{n_i-1}{2} \ [1]
$$
de type unitaire. On peut donc absorber ces cas dans le cas unitaire.

Calculons la valeur propre correspondante. On~a
\begin{align*}
\lambda &=-\langle P,P \rangle + \langle \rho,\rho \rangle \\
&=-1- \langle P_2,P_2 \rangle  +\langle \rho,\rho \rangle ,
\end{align*}
le poids $m_1$ de $\tau_1$ (\S~\ref{X1}) étant d'ailleurs égal à $1$. Si $\pi$ intervient dans les $k$-formes, $(m_1,P_2')$ est donc le poids 
d'une représentation de $M$ associée aux $k$-formes ; le paramètre $s=1/2-1/2$ est nul. On est dans le cas tempéré et donc

\begin{lem}
Dans le cas $A$, on a
$$
\lambda \ge \alpha_k
$$
si $\pi$ contribue à $\Omega^k$, $\alpha_k>0$ étant la borne tempérée. De plus il existe un $M$-type associé à $\Omega^k$ (Pedon \cite{Pedon}) et $M_1$-trivial.
\end{lem}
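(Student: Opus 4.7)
Le plan est d'exploiter que le calcul explicite des coordonnées de $P'$ effectué ci-dessus, dans le cas $A$, force la représentation $\pi=I(\tau,s)$ à avoir paramètre $s=0$, la plaçant ainsi dans la série principale unitaire, donc tempérée. Je commencerais par relever que les deux premières coordonnées de $P'$, héritées de l'unique terme de type $(c)$ dans \eqref{X16} avec $n_j=2$, sont exactement $(1/2,-1/2)$, d'où $p_1+q_1=0$ et $s=(p_1+q_1)/2=0$ par \eqref{X3}. La représentation $\pi=I(\tau,0)$ est alors unitairement induite d'une représentation unitaire (de dimension finie) de $M$, et constitue donc une représentation tempérée de $G$.

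La minoration $\lambda\ge\alpha_k$ s'en déduirait formellement en invoquant la formule de Matsushima \cite[Ch.~1]{Asterisque} et le lemme~\ref{LX1} : on a $\lambda=\langle\rho,\rho\rangle-\langle P,P\rangle$, et puisque par hypothèse $\pi$ contribue à $\Omega^k$, elle possède un $K$-type figurant dans $\wedge^k\mathfrak{p}^*$. Or toute représentation tempérée de $G$ ayant un tel $K$-type apparaît dans le spectre tempéré continu du laplacien $\Delta_k$ sur $L^2(X)$, dont Pedon \cite{Pedon} a calculé explicitement la borne inférieure~$\alpha_k$. La minoration cherchée en résulte directement ; l'argument est essentiellement automatique une fois acquis le caractère tempéré de~$\pi$.

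Pour la seconde assertion --- l'existence d'un $M$-type $M_1$-trivial associé à $\Omega^k$ --- je consulterais la description explicite, due à Pedon, des $K$-types intervenant dans $\wedge^k\mathfrak{p}^*$ puis de leur branchement à $M=M_1\times M_2$. L'étape principalement calculatoire --- et, à mon sens, le seul point non immédiat du plan --- est cette vérification combinatoire : exhiber dans cette énumération au moins un $M$-type apparaissant dans $\Omega^k$ dont la composante sur $M_1=\mathrm{Sp}(1)$ soit triviale. Ceci devrait résulter directement de la structure de $\mathfrak{p}$ comme $M$-module, qui contient des sous-espaces $M_1$-invariants triviaux (correspondant aux directions tangentes à $M_2\subset K$) dont les puissances extérieures engendrent naturellement de tels $M$-types, sans présenter d'obstacle conceptuel.
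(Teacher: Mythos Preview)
Your treatment of the first assertion is correct and coincides with the paper's: the unique type~$(c)$ term contributes the only non-integral coordinates $(1/2,-1/2)$ of $P'$, these must be $(p_1,q_1)$ since $P_2'$ is regular integral, hence $s=(p_1+q_1)/2=0$, and temperedness gives Pedon's lower bound~$\alpha_k$.

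For the second assertion, however, you take an indirect route that both misreads the statement and contains an error. The paper does not prove the existence of an $M_1$-trivial $M$-type by an independent branching computation on $\wedge^k\mathfrak{p}^*$. Instead it reads it off directly from the Case~$A$ data: since $(p_1,q_1)=(1/2,-1/2)$, one has $m_1=1$, i.e.\ $c(\tau_1)=m_1^2-1=0$, so the inducing representation $\tau=\tau_1\otimes\tau_2$ of $M$ already has $\tau_1$ trivial. The hypothesis that $\pi=I(\tau,0)$ contributes to $\Omega^k$ then forces, by Frobenius reciprocity, this very $\tau$ to appear among Pedon's $M$-types for $\Omega^k$. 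The assertion is thus a by-product of the Case~$A$ analysis, not a separate combinatorial fact to be checked.

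Your proposed mechanism --- ``sous-espaces $M_1$-invariants triviaux correspondant aux directions tangentes à $M_2\subset K$'' --- is incorrect as stated: tangent directions to $M_2$ lie in $\mathfrak{m}_2\subset\mathfrak{k}$, not in $\mathfrak{p}$, and so do not furnish $M_1$-trivial components of $\wedge^k\mathfrak{p}^*$. One could attempt a correct branching argument (the restriction of $\mathfrak{p}_\CC$ to $M_1$ does contain trivial summands, but for different reasons), yet this would prove a stronger unconditional statement that the lemma neither claims nor needs.
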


On suppose maintenant qu'il \textit{n'existe pas} de terme de type~$(c)$.

Les termes réels de $(a)$ $(p_i\in R)$ donnent des éléments non entiers, donc il existe au plus un terme, avec $n_i=2$, contribuant $(p_i+\frac{1}{2}$, $p_i-\frac{1}{2})$ nécessairement égal à $(p,q)$, ou bien deux termes $(p_1,p_2)=(p,q)$. Considérons successivement les deux possibilités.

\subsubsection{Cas B}
Un terme réel de $(a)$ avec $n_i=2$. 
Dans ce cas, les arguments précédents s'appliquent aux termes restants, de type $(b)$ ou $(d)$. On a donc
\begin{align*}
\lambda &=-2(p^2+q^2) - \langle P_2,P_2 \rangle +\langle \rho,\rho \rangle \\
&=-4 p_i^2 -1 - \langle P_2,P_2 \rangle + \langle \rho,\rho \rangle.
\end{align*} 
On sait que $p_i\in R$ ; $\lambda$ étant réel on a en fait $p_i\in i\RR \cup] -\eta_N,\eta_N[$, correspondant à un segment de série complémentaire pour $I(\tau,s)$. Noter que $m_1$ est toujours égal à 1. Ainsi

\begin{lem} \label{LX5}
Dans le cas $B$, on a dans $\Omega^k$ :
$$
\lambda \ge \alpha_k -4 \eta_N^2 = \alpha_k -\left(\frac{N^2-1}{N^2+1}	\right) > \alpha_k -1 >0.
$$
Il existe un $M$-type associé à $\Omega^k$ et $M_1$-trivial.
\end{lem}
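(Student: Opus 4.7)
Le plan consiste à combiner l'expression explicite \eqref{X5} (ou \eqref{X6}) de la valeur propre avec la définition de $\alpha_k$ comme borne inférieure du spectre tempéré sur les $k$-formes, après avoir explicité le paramètre de Langlands de $\pi$.

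La première étape est de déchiffrer le paramètre d'Arthur dans le cas $B$. L'unique terme réel de type $(a)$ avec $n_i=2$ contribue les coordonnées $(p_1,q_1)=(p_i+\frac{1}{2},\ p_i-\frac{1}{2})$ à $P'$, avec $p_i\in R$. On en déduit $s=(p_1+q_1)/2=p_i$ et $m_1=|p_1-q_1|-1=0$, de sorte que $\tau_1$ est triviale. Les autres coordonnées de $P'$, entières ou demi-entières, déterminent par \eqref{X15}--\eqref{X16} les composantes de $P_2'$ et donc la représentation $\tau_2$.

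Substituant dans \eqref{X6} on isole la dépendance en $s=p_i$~:
$$
\lambda_\pi = -4 p_i^2 + \lambda_0, \quad \lambda_0:=(2n+1)^2-\langle P_2,P_2\rangle+\langle \rho_2,\rho_2\rangle +1,
$$
où $\lambda_0$ est la valeur propre associée à la représentation induite tempérée $I(\tau,0)$ obtenue à partir du même couple $(\tau_1,\tau_2)$ en fixant $s=0$. Les $K$-types d'une série principale ne dépendant pas du paramètre continu $s$, et $I(\tau,p_i)=\pi$ contribuant par hypothèse à $\Omega^k$, la représentation tempérée $I(\tau,0)$ y contribue également. Ceci établit au passage l'existence d'un $M$-type $M_1$-trivial dans $\Omega^k$; par définition de $\alpha_k$ on en déduit $\lambda_0\ge \alpha_k$.

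Enfin, $\lambda_\pi$ étant réel et $p_i\in R$, on a soit $p_i\in i\mathbf{R}$ (série principale unitaire, d'où $-4p_i^2\ge 0$), soit $p_i$ réel avec $|p_i|\leq \eta_N$ (segment de série complémentaire, d'où $-4p_i^2\ge -4\eta_N^2$). Dans les deux cas,
$$
\lambda_\pi \ge \alpha_k-4\eta_N^2=\alpha_k-\Big(\frac{N^2-1}{N^2+1}\Big)^2.
$$
Comme $\eta_N<\frac{1}{2}$, $4\eta_N^2<1\leq \alpha_k$, d'où la positivité stricte. L'étape la plus délicate du plan est le déchiffrage du paramètre d'Arthur menant à la normalisation $m_1=0$ et au repérage du $M$-type correspondant dans les formes différentielles (via les $K$-types décrits par Pedon \cite{Pedon}); une fois ces données explicitées, la comparaison avec le spectre tempéré est quasi immédiate.
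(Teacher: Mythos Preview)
Your argument is correct and follows exactly the paper's line: identify $(p,q)=(p_i+\tfrac12,\,p_i-\tfrac12)$ so that $s=p_i$ and $\tau_1$ is trivial, write $\lambda_\pi=-4p_i^2+\lambda_0$ with $\lambda_0$ the tempered eigenvalue at $s=0$, use that the $K$-types of $I(\tau,s)$ are independent of $s$ to get $\lambda_0\ge\alpha_k$, and bound $4p_i^2\le4\eta_N^2$ via $p_i\in R$ together with the reality of $\lambda$. One caveat: the symbol $m_1$ in formula~\eqref{X6} denotes $|p_1-q_1|$ (cf.\ the line ``$p_1=\frac{m_1}{2}+s$'' just above it), not the highest weight $|p_1-q_1|-1$ used a few lines earlier, so the correct substitution there is $m_1=1$ (as the paper writes just before the lemma), giving $\lambda_0=(2n+1)^2-\langle P_2,P_2\rangle+\langle\rho_2,\rho_2\rangle$ without your extra $+1$ --- a harmless slip, since you only use $\lambda_0\ge\alpha_k$.
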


Supposons maintenant qu'il existe des termes réels de $(a)$ avec~$n_i=1$.

Dans ce cas, $p_1$, $p_2\in R$ ne sont pas entiers, donc $\{p,q\} = \{p_1,p_2\}$. Alors $p-q\in \ZZ$ donc $p-q=0$ si $p,q\in R$. Mais ceci est impossible (le caractère induisant $\chi$ est ramifié, cf.~\S~\ref{X1}).

On est donc ramené au cas où il n'y a ni terme $(c)$ ni terme $(a)$ réel.

Considérons les termes $(d)$ de \eqref{X6}. Puisque $P_2'$ a $(n-1)$ coordonnées non nulles et distinctes, on voit aussitôt qu'il existe au plus un terme tel que $n_j\ge 3$, complété peut-être par deux termes $n_i=1$, ou bien un ou trois termes~$n_i=1$.

Considérons le terme $(b)$ de \eqref{X15}. On a par hypothèse $\mathrm{Re}(p_i+q_i)\in ]-1,1[$. Si le terme associé de $(b)$ contient un entier, on a $p_i+x_i\in \ZZ$ donc $q_i+x_i$ et $q_i-x_i\in \ZZ$ où $x_i$ apparaît dans $[n_i]$. Alors $p_i+q_i\in \ZZ$, contrairement au fait que $p_i+q_i\not=0$ par hypothèse. Donc les termes $(b)$ ne peuvent contribuer à $P_2'$. Il y a donc au plus un terme $(b)$, avec $n_i=1$, et alors $(p_i,q_i)=(p,q)$.

\subsubsection{Cas C}
Un terme $(b)$, $(p,q)=(p_i,q_i)$. Tous les autres termes de $P'$ sont alors entiers et  de valeurs absolues distinctes. Il y a donc au plus un unique terme $(d)$, contribuant $(k,\ldots ,1)$ avec $k=\frac{n_i-1}{2}\ge 1$ (ou le terme $\ominus0$). Le paramètre $P_2$ est obtenu par concaténation des $p_i[n_i]$ de type unitaire et de cet intervalle (ou $\emptyset$). Puisque $s=\frac{p_i+q_i}{2}\in R-\{0\}$, on~a
\begin{align*}
\lambda &=-2(p^2+q^2) -\langle P_2,P_2 \rangle  + \langle \rho,\rho \rangle \\
&= -m_1^2 -4s^2 - \langle P_2,P_2 \rangle  + \langle \rho,\rho  \rangle
\end{align*}  
de sorte qu'on a la minoration analogue au lemme \ref{LX5}, que l'on ne reproduit pas. Dans ce cas $m_1$ peut être~$>1$.

\subsubsection{Cas D}
Pas de terme $(b)$. Dans ce cas on n'a que des termes $(d)$ et des termes $(a)$ de type unitaire. Noter que d'après le lemme~\ref{LX2}, ceux-ci contribuent des entiers sauf peut-être si $\chi$ apparaît avec une multiplicité paire.

Comme expliqué dans l'introduction c'est le cas où nos résultats sont les plus faibles. Nous considérons les termes $(a)$ de type unitaire :
$$
\sum (z/\overline{z})^{p_i} \otimes r_i = \sum \chi_i \otimes r_i \quad (p_i\not= 0)
$$
donnant un paramètre
$$
\sum p_i[n_i].
$$

Sachant que deux termes au plus de $P'$ ont la même valeur absolue, on voit~que
\begin{equation}
\mathrm{mult}(\chi_i\otimes r_i) \le 1 \quad \mathrm{sauf\ si}\quad \dim(r_i)\le 2.
\end{equation}

Si $\dim(r_i)=2$, on obtient deux coïncidences donc de~plus
\begin{equation}
\begin{split}
& \mbox{Il existe au plus un terme avec } \dim(r_i)=2\\
& \mbox{et alors } \mathrm{mult}(\chi_i\otimes r_i)=1.
\end{split}
\end{equation}
Alors le terme $\chi \otimes r\oplus \chi^{-1}\otimes r$ est de multiplicité impaire au sens du lemme~\ref{LX2}, et l'on a donc :
$$
p_i + 1/2 \in \ZZ ,
$$
ceci étant donc vrai dans tous les cas. Il en résulte que tous les termes de type $(a)$ sont entiers. Puisque c'est aussi vrai des coordonnées des termes $(d)$, on a enfin : 

\begin{lem}
Dans le cas $D$, on a pour toute valeur propre non nulle dans~$\Omega^k$ :
$$
\lambda = -2(p^2+q^2) - \langle P_2,P_2 \rangle + \langle \rho,\rho \rangle
$$
qui est un entier pair $\ge2$.
\end{lem}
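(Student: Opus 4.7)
The plan is essentially to collect what the Case D analysis has already produced and perform a parity check on the eigenvalue formula of Lemma~\ref{LX1}.

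First, I would consolidate the integrality statement implicit in the paragraphs above. In Case D only two kinds of Arthur pieces contribute to the infinitesimal parameter $P$: the (a) unitary-type pieces $p_i[n_i]$ with $p_i\neq 0$, and the (d) pieces $[n_j]^+$ with $n_j$ odd. The latter $[n_j]^+=(\tfrac{n_j-1}{2},\ldots,0)$ are integer vectors precisely because $n_j$ is odd. For the (a) pieces, Lemma~\ref{LX2} matches the parity of $n_i=\dim(r_i)$ to the class of $p_i$ modulo $\ZZ$ (odd $n_i$ when $p_i\in\ZZ$, even $n_i$ when $p_i\in\tfrac12+\ZZ$), which in either subcase gives $p_i+\tfrac{n_i-1}{2}\in\ZZ$; the multiplicity argument just above (forcing $p_i+\tfrac12\in\ZZ$ when $\dim(r_i)=2$) handles the remaining degenerate subcase. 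Hence every coordinate of $P\in\widehat{\mathfrak h}$ is an integer.

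Next, I would plug this into Lemma~\ref{LX1}. Writing
$$P=(p_1,q_1,p_2,\ldots,p_n,0,-p_n,\ldots,-p_2,-q_1,-p_1)$$
with all entries in $\ZZ$, one has
$$\langle P,P\rangle \;=\; 2(p_1^2+q_1^2)+2\sum_{i=2}^{n}p_i^2 \;\in\; 2\ZZ,$$
while $\langle\rho,\rho\rangle=2\sum_{x=1}^{n+1}x^2\in 2\ZZ$ by direct inspection of $\rho=(n+1,n,\ldots,1,0,-1,\ldots,-(n+1))$. Therefore
$$\lambda \;=\; -\langle P,P\rangle+\langle\rho,\rho\rangle \;=\; -2(p^2+q^2)-\langle P_2,P_2\rangle+\langle\rho,\rho\rangle$$
is visibly an even integer, which already gives the announced formula and the parity claim.

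Finally, to conclude $\lambda\geq 2$ I would invoke the positivity of the (positive) Laplacian $\Delta_k$ on $L^2$-forms, which gives $\lambda\geq 0$; combined with the hypothesis that $\lambda$ is a \emph{non-zero} eigenvalue and the evenness just established, this forces $\lambda\geq 2$. The main obstacle was never this last step but the integrality in Case D, and that was already settled by the Arthur-parameter combinatorics preceding the lemma; what remains here is purely parity bookkeeping.
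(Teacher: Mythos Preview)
Your proposal is correct and matches the paper's approach: the lemma is stated in the paper without a separate proof, as a direct consequence of the Case~D analysis that precedes it. You have simply made explicit the two steps the paper leaves to the reader --- the parity computation showing $\langle P,P\rangle,\langle\rho,\rho\rangle\in 2\ZZ$, and the observation that a nonzero even eigenvalue of the positive Laplacian must be $\geq 2$.
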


On a donc enfin :

\begin{thm}
Soit $\lambda>0$ une valeur propre du laplacien dans $\Omega^k(U)$. Alors
\begin{enumerate}
\item $\lambda$ est un entier pair $>0$, ou
\item $\lambda\ge \alpha_k -\left(\frac{N^2-1}{N^2+1}\right)^2$
\end{enumerate}
où $\alpha_k$ est la borne de Pedon \cite{Pedon}.
\end{thm}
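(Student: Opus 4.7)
Le plan est d'assembler les ingrédients déjà mis en place dans ce paragraphe, en procédant en trois temps. D'abord, je réduirais le problème à la forme quasi-déployée $G^*$, le groupe orthogonal déployé en dimension $N=2n+3$. Cette réduction suivrait mot pour mot celle du cas unitaire esquissée au \S~\ref{S1} (cf. la démonstration du théorème \ref{T1}) : via la stabilisation et la déstabilisation de la formule des traces pour $\G(F)$, on verrait que le caractère infinitésimal $\nu$ d'une représentation $\pi$ intervenant dans $L^2(\Gamma\backslash G)$ provient d'un paramètre d'Arthur $\psi$ pour $G^*$, au sens de \cite{ArthurBook}.

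Ensuite, j'exploiterais la structure des paramètres d'Arthur déjà obtenue. Le lemme \ref{LX2} classifie les décompositions admissibles de $\psi_\CC$ selon les formes \eqref{X11}-\eqref{X14}, dont les contributions au paramètre infinitésimal $P'$ sont données par \eqref{X15}-\eqref{X16}. Le lemme \ref{LX1} exprime alors $\lambda_\pi$ en fonction de $P$, et l'hypothèse que $\pi$ contribue aux $k$-formes contraint, par réciprocité de Frobenius, les $K$-types (donc les $M$-types) susceptibles d'apparaître. Combinée à la majoration \eqref{X8}, avec $\eta_N=\frac{1}{2}-\frac{1}{N^2+1}$, cela restreint fortement la forme de $P'$, que le lemme \ref{LX3} achève de contraindre. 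On arrive ainsi aux quatre cas A, B, C, D déjà discutés : le cas A produit la borne tempérée $\lambda \ge \alpha_k$; les cas B et C, où intervient un segment de série complémentaire, donnent l'estimée plus faible $\lambda \ge \alpha_k - 4\eta_N^2 = \alpha_k - \left(\frac{N^2-1}{N^2+1}\right)^2$; enfin le cas D force $\lambda$ à être un entier pair strictement positif, puisque toutes les coordonnées de $P$ y deviennent entières et l'expression $-\langle P,P \rangle + \langle \rho,\rho \rangle$, avec $\langle \rho,\rho \rangle = 2\sum_{x=1}^{n+1} x^2$, est alors automatiquement un entier pair.

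L'alternative du théorème se lit donc : le cas D fournit l'option (i), et les cas A, B, C fournissent l'option (ii) (en notant que $\alpha_k \ge 1$ d'après Pedon, de sorte que la borne (ii) est effectivement non triviale, et qu'elle se renforce en $\lambda \ge \alpha_k$ sous la conjecture de Ramanujan, {\it i.e.} pour $\eta_N=0$). L'obstacle principal réside dans la réduction initiale à $G^*$ : elle exige la stabilisation de la formule des traces tordue pour $\GL(N)$ (désormais annoncée grâce aux travaux de Waldspurger et Moeglin-Waldspurger) ainsi que la descente endoscopique correspondante, analogue à celle menée au \S~\ref{S1} dans le cas unitaire; en revanche, la partie spectrale proprement dite n'est qu'un simple assemblage des lemmes et des cas déjà établis ci-dessus.
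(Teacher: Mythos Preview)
Your proposal is correct and follows essentially the same approach as the paper: the theorem is stated as the direct synthesis of the case analysis A, B, C, D (cases A--C yielding option (ii), case D yielding option (i)), with the global reduction to $G^*$ supplied separately in \S~\ref{SX5}. One small caveat: the reduction is not literally \og mot pour mot\fg\ the unitary one, since the endoscopic groups here are of type $\SO(2a)\times\SO(2b+1)$ (with the extra datum $\eta$), and the paper devotes \S~\ref{SX5} to spelling out precisely these differences; otherwise your outline matches the paper's argument.
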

Dans le degré médian $(k=2n)$ et ses voisins $(k=2n-1,2n+1)$, on a $\alpha_k=1$. La partie (i) du théorème est donc vide. Pour $k\le 2n-2$, $\alpha_k\ge 4$.

\subsection{Le cas des fonctions (cas sphérique)}
Dans le cas des fonctions $\Omega^0=C^\infty(Y)$, on obtient le résultat suivant. On~a
$$
P_2 = \rho_2 =(n-1,\ldots ,1,0,\ldots , 1-n)
$$
donc (\S~\ref{X1}) 
$$-\langle P_2,P_2 \rangle + \langle \rho,\rho \rangle =(2n+1)^2+1.$$

Par ailleurs $m_1=1$. La borne tempérée est donc :
$$
\alpha_0 = (2n+1)^2
$$
et $\lambda =-2(p^2+q^2)+(2n+1)^2+1$.

Dans ce cas, le calcul de $D$ donne l'estimée suivante.

Ecrivons, à l'ordre près, $(p,q)=\left(\frac{1}{2}+s, -\frac{1}{2}+s\right)$ où $s\in \frac{1}{2}+\ZZ$. Alors
\begin{align*}
\lambda &=(2n+1)^2-4s^2\\
&=(2n+1)^2-k^2
\end{align*}
où $k=1,3,\ldots , 2n+1$ (valeur maximale, correspondant à la représentation triviale). On a donc :

\begin{thm} \label{T38}
Si $\lambda$ est une valeur propre du laplacien dans $C^\infty(Y)$,
\begin{align*}
&\lambda = (2n+1)^2-k^2\quad (k=1,3,\ldots 2n+1)\\
\mathrm{ou} \hskip2cm &\lambda\ge (2n+1)^2 -\Big(\frac{N^2-1}{N^2+1}\Big)^2 \quad (N=2n+3).
\end{align*}
\end{thm}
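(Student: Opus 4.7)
The plan is to specialize the case analysis of the previous subsection (Lemmas \ref{LX3}--\ref{LX5} and the subsequent discussion) to the spherical setting, where $\pi$ contributes to $\Omega^0(Y) = C^\infty(Y)$ and must therefore admit a nonzero $K$-fixed vector. By Frobenius reciprocity applied to the induced principal series $I(\tau,s)$, this forces the $M$-type $\tau = \tau_1 \otimes \tau_2$ to be trivial: $\tau_2$ trivial gives $P_2 = \rho_2 = (n-1, n-2, \ldots, 1, 0, -1, \ldots, -(n-1))$, and $\tau_1$ trivial gives $|p_1 - q_1| = 1$ so $m_1 = 0$. By Lemma \ref{LX1} and the subsequent normalization, the eigenvalue then reads $\lambda = -2(p^2+q^2) + (2n+1)^2 + 1$, and in particular $\alpha_0 = (2n+1)^2$.

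First I would dispose of Cases A, B, C by showing they all yield eigenvalues satisfying the second alternative (ii) of the theorem. Case A (tempered) directly gives $\lambda \geq \alpha_0 = (2n+1)^2$, which is stronger than (ii). Case B corresponds to a real character $\chi_i$ with $n_i = 2$ contributing $(p_i + 1/2, p_i - 1/2) = (p,q)$; reality of $\lambda$ together with $\chi_i \in R$ forces $p_i \in i\RR \cup {]-\eta_N, \eta_N[}$, so that $\lambda = (2n+1)^2 - 4p_i^2 \geq (2n+1)^2 - 4\eta_N^2$, which is exactly bound (ii) since $4\eta_N^2 = \bigl(\tfrac{N^2-1}{N^2+1}\bigr)^2$. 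Case C is parallel, noting that for spherical $\pi$ the condition $m_1 = 0$ is compatible with the analysis.

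The main content is Case D, in which only type-$(a)$ unitary terms and type-$(d)$ terms occur. Writing $\chi = (z/\bar z)^s$ for the unitary-type character involved, Lemma \ref{LX2} together with the spherical constraint $P_2' = \rho_2' = (n-1, n-2, \ldots, 1)$ forces a very rigid combinatorial shape on the Arthur parameter: the multiset $P'$ must be obtained by concatenating shifted arithmetic progressions $s[n_i]$ (from $(a)$-terms) and $[n_j]^+$ (from $(d)$-terms) so as to realize $(p, q, \rho_2')$ with $p - q = 1$ and the remaining entries equal to $(n-1, n-2, \ldots, 1)$. Up to reordering, this forces $(p,q) = (\tfrac{1}{2}+s, -\tfrac{1}{2}+s)$ with $s \in \tfrac{1}{2}+\ZZ$. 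Substituting, one gets $p^2 + q^2 = \tfrac{1}{2} + 2s^2$, hence
\[
\lambda = -1 - 4s^2 + (2n+1)^2 + 1 = (2n+1)^2 - (2s)^2 = (2n+1)^2 - k^2
\]
with $k = 2s$ an odd integer; the bound $|k| \leq 2n+1$ comes from the fact that the total degree of the Arthur parameter is $N = 2n+3$, with equality $k = 2n+1$ corresponding to the trivial representation (eigenvalue $0$).

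The main obstacle, technically, is the Case D combinatorics: verifying that the only way to realize $\rho_2'$ together with a single unitary-type shift is the form described, with $k$ ranging precisely over odd integers in $[1, 2n+1]$. This is a finite check, handled by observing that any unitary block $s[n_i]$ with $s \neq 0$ leaves no room for conflicting entries among the $(n-1)$ distinct positive integers in $P_2'$, and that the parity constraint of Lemma \ref{LX2} forces $s \in \tfrac{1}{2}+\ZZ$. Once this combinatorial step is done, the theorem follows immediately from the computation above and the Case A/B/C bounds.
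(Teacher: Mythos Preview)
Your proposal is correct and follows essentially the same approach as the paper: specialize the case analysis of \S 3.3 to the spherical setting ($P_2' = \rho_2'$ and $|p-q|=1$), obtain the continuous bound from Cases~A--C, and extract the discrete odd values of $k$ from Case~D via the integrality of $p,q$ established there. One cosmetic remark: you write $m_1 = 0$ while the paper's \S 3.4 writes $m_1 = 1$; this is only the notational wobble already present in the paper between the highest weight and $|p_1-q_1|$, and since you correctly derive $|p-q|=1$ and the eigenvalue formula $\lambda = -2(p^2+q^2)+(2n+1)^2+1$, nothing is affected.
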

Si on suppose la conjecture de Ramanujan, on peut  remplacer la deuxième alternative par
$\lambda \geq (2n+1)^2$. Le théorème est alors optimal. Il découle en effet de \cite{Faraut} que chaque représentation sphérique de caractère infinitésimal
$$(a , a-1 , \rho_2 , 1-a , a), \quad a = 1 , 2 , \ldots , n+1,$$
intervient discrètement dans la représentation régulière $L^2 (\mathrm{Sp} (n-1 , 1) \backslash \mathrm{Sp} (n,1))$.\footnote{Faraut \cite{Faraut} note $\pi_{\rho + 2r}$ cette représentation, avec $\rho = 2n+1$ et $0 < \rho + 2r < \rho$; de sorte que $a=n+r+1$ et $\lambda_\pi = \rho^2 - (\rho+2r)^2= (2n+1)^2 - (2a-1)^2$.} \ 
Comme dans \cite[Example B]{BLS}, on en déduit que ces représentations appartiennent au dual automorphe de $\mathrm{Sp} (n,1)$. Puisqu'elles sont par ailleurs isolées d'après le théorème \ref{T38}, on conclut que ces représentations interviennent dans un $L^2 (\Gamma \backslash \mathrm{Sp} (n,1))$ pour un certain groupe de congruence $\Gamma$.

\subsection{Complétion de l'argument global} \label{SX5}
Nous revenons à l'argument global de réduction au cas déployé annoncé au début de \ref{X2}. C'est de nouveau celui de \cite{Inventiones}, à la différence près qu'ici d'autres types de groupes classiques apparaissent comme groupes endoscopiques. 

Revenons aux notations du début de \ref{X1}, de sorte que $\G$ est un groupe semi-simple sur $F$. On note $\G^*=\SO^*(2n+3,F)$ sa forme intérieure déployée. Le lecteur peut supposer $\G$ anisotrope : l'argument supplémentaire si $\G$ est de rang 1 sur $F$ (et $F$ est donc égal à $\mathbf{Q}$) est exactement semblable à celui de la fin du \S~1  et ne sera pas répété. Pour simplifier les notations écrivons $G$, $G^*$ pour $\G$, $\G^*/F$.

Comme dans \cite[\S~4]{Inventiones}, on commence par \textit{stabiliser} la formule des traces, d'où avec les notations d'Arthur et de cet article
\begin{equation} \label{X20}
 I_{disc,t}^G(f) = \sum_H \iota(G,H) S_{disc,t}^H (f^H).
\end{equation} 
 
Les groupes $H$ sont les groupes endoscopiques elliptiques. Ils sont décrits par Arthur \cite[\S~1.2]{ArthurBook} : leurs groupes duaux sont donnés par
$$
\widehat{H} = \SO (2a,\CC)\times \SO(2b+1,\CC) \subset \widehat{G},
$$
$a+b=n+1$. Puisque $\SO(2a)$ possède des automorphismes extérieurs, une donnée supplémentaire est $\eta : \mathrm{Gal}(\overline{F}/F)\to \mathrm{Autext} \ \SO(2a) =\ZZ/2\ZZ$.\footnote{Le morphisme $\eta$ est non trivial si $a=1$.} Comme dans \cite{Inventiones}, ces données sont en nombre fini, la ramification étant fixée. La donnée de $\eta$ définit ${}^L H$ et on a naturellement~\cite{Arthur}
$$
{}^L H \longrightarrow {}^L G \longrightarrow \GL(N,\CC) \times \mathrm{Gal} (\overline{F} / F).
$$
 
Nous ne répétons pas les généralités relatives au transfert $f\leadsto f^H$ (\cite[\S~2]{Inventiones}) désormais acquis grâce aux travaux de Ngô, Waldspurger et al. En revanche, les arguments du \S~\ref{X2} relatifs aux caractères infinitésimaux nécessitent de nouveau le contrôle de celui-ci dans les paquets $\Pi (\psi)$ de représentations de $G^*(F_{v_0})$, basés sur la formule d'Arthur \cite[Théorème 2.2.1]{Arthur} :
 \begin{equation} \label{X21}
\mathrm{trace}(\Pi (\varphi)A_\theta) = \sum_\pi \varepsilon(\pi) m(\pi) \mathrm{trace}\ \pi(f)
 \end{equation}
où $\varphi$ est une fonction sur $\GL(N,\RR)$, $f$ sur $G^*(F_{v_o}) = G^*(\RR)$ lui est associée, $\pi$ décrit $\Pi (\psi)$ et les autres notations sont celles d'Arthur.
 
Comme dans la démonstration du lemme~1.1, nous pourrions utiliser que $z\varphi$ est associée à $(Nz)f$ pour 
 $$
 z\in \mathcal{Z}(\GL(N,\CC))\  \mathrm{et}\ N:\mathcal{Z}(\GL(N,\CC)) \longrightarrow \mathcal{Z} (G^*(\RR))
$$
l'application naturelle entre les centres des algèbres enveloppantes correspondant à notre identification des paramètres infinitésimaux dans \ref{X2}. Ceci se déduit des résultats généraux de Shelstad pour les groupes non connexes dans le cas tempéré \cite{Shelstad}. Plus directement, l'identité \eqref{X21}, combinée avec les formules d'intégration de Weyl, donne une identité
 \begin{equation} \label{X22}
 \Theta_{\Pi,\theta}(t) = \sum_\pi \varepsilon(\pi) m(\pi) \Theta_\pi (t')
 \end{equation}
entre caractère tordu et caractères ordinaires des représentations $\pi$. Noter que le facteur $\Delta_{IV}$ de \cite[(3.4)]{Inventiones} n'intervient pas, car $G^*$ est l'unique groupe endoscopique de dimension maximale pour $\GL(N)$ tordu. Les éléments $t$ (fortement $\theta$-régulier dans $\GL(N,\RR)$) et $t'$ (régulier dans $G(\RR)$) sont associés. On termine en utilisant l'argument de \cite[Lemme~3.4]{Inventiones}
relatif aux parties radiales, le fait que la correspondance $\mathcal{N}$ de Kottwitz-Shelstad et Waldspurger est surjective \cite[\S~3.2]{WaldspurgerDuke} et l'indépendance linéaire des~$\Theta_\pi$.\footnote{L'argument relatif à la surjectivité de la norme était omis dans \cite[Lemme 3.4]{Inventiones}. Pour ce cas voir Kottwitz-Shelstad \cite[\S 3.2-3.3]{KottwitzShelstad}. Plus précisément, ceux-ci démontrent la surjectivité si toute classe de conjugaison tordue, fortement régulière, et rationnelle, de $\GL(N)$ contient un élément réel. Le seul cas utilisé dans \cite{Inventiones} et non traité par Waldspurger \cite{WaldspurgerDuke} est celui o\`{u} $G=\SO^*(2n)$, $\widehat{G}= \SO(2n,\CC)$. Mais l'assertion résulte alors de la construction par Waldspurger de la norme pour $G=\SO^*(2n+1)$, $\widehat{G}= \Sp(n,\CC)$.}

On sait maintenant que les caractères infinitésimaux des représentations de $\Pi(\psi)$ sont définis, comme dans le \S~\ref{X2}, par $\psi$. Revenons alors à l'identité \eqref{X20}. Pour $G$ anisotrope, le membre de gauche est simplement la partie associée à $t$ de la trace de $f$~sur
$$
L^2(G(F)\backslash G(\Ab)).
$$
Comme dans \cite[\S~5]{Inventiones}, il s'agit maintenant de déstabiliser le membre de droite de \eqref{X20}, ce qui donne une formule (cf. \cite[5.5]{Inventiones})
$$
I_{\mathrm{disc},t}^G(f) = I_{\mathrm{disc},t}^{G^*}(f^*) + \sum_{\mathcal{J}}\eta(G,\mathcal{J}) I_{\mathrm{disc},t}^J(f^{\mathcal{J}})
$$
portant sur des groupes endoscopiques itérés 
$$\mathcal{J}=(G,H_1,\ldots H_r),\ H_r =J\not\cong G^*.$$

D'après la description précédente des groupe endoscopiques pour $G$ (ou $G^*$), on a alors
$$
\widehat{J} = \prod_i \SO(2 a_i,\CC) \times \prod_i \SO(2b_j+1,\CC)
$$
et ${}^L J$ est défini par la donnée de caractères d'ordre $2$, $\eta_i$, de $\mathrm{Gal}(\overline{F}/F)$. La fin de l'argument est alors identique à \cite[\S~6.2]{Inventiones}. Ceci termine la démonstration.

\bibliography{bibli}

\bibliographystyle{plain}

\end{document}